\documentclass[12pt,reqno,fleqn]{amsart}  
\usepackage{tikz}
\usepackage{amsmath,amstext,amsthm,amssymb,amsxtra}
\usepackage[top=1.5in, bottom=1.5in, left=1.25in, right=1.25in]	{geometry}
\usepackage{lmodern}

\usepackage{graphics}
\graphicspath{{./images/}}

\usepackage{mathtools}
\mathtoolsset{showonlyrefs,showmanualtags}

\usepackage{hyperref} 
\hypersetup{
	colorlinks=true,       
	linkcolor=blue,          
	citecolor=magenta,        
	filecolor=magenta,      
	urlcolor=cyan           
}

\usepackage[msc-links]{amsrefs}


\newtheorem{theorem}{Theorem}[section]

\newtheorem{lemma}{Lemma}[section]
\newtheorem{corollary}{Corollary}[section]
\newtheorem{OldTheorem}{Theorem}
\theoremstyle{definition}
\newtheorem{definition}{Definition}[section]
\theoremstyle{definition}

\theoremstyle{remark}
\newtheorem{remark}{Remark}[section]

\def\ZF{\ensuremath{\mathcal F}}

\def\OSC{{\rm OSC}}

\def\MM^d{\ensuremath{\mathfrak M}}
\def\MM{\ensuremath{\mathcal M}}

\def\ZA{\ensuremath{\mathcal A}}

\def\ZZ{\ensuremath{\mathbb Z}}
\def\ZI{\ensuremath{\textbf 1}}

\def\ZN{\ensuremath{\mathbb N}}

\def\ZP{\ensuremath{\mathcal P}}

\def\zT{\ensuremath{\mathcal T}}
\def\ZR{\ensuremath{\mathbb R}}

\def\ZT{\ensuremath{\mathbb T}}

\def\ZH{\ensuremath{\mathcal H}}

\numberwithin{equation}{section}
\def\md#1#2\emd{\ifx0#1
	\begin{equation*} #2 \end{equation*}\fi  
	\ifx1#1\begin{equation}#2\end{equation}\fi   
	\ifx2#1\begin{align*}#2\end{align*}\fi   
	\ifx3#1\begin{align}#2\end{align}\fi    
	\ifx4#1\begin{gather*}#2\end{gather*}\fi  
	\ifx5#1\begin{gather}#2\end{gather}\fi   
	\ifx6#1\begin{multline*}#2\end{multline*}\fi  
	\ifx7#1\begin{multline}#2\end{multline}\fi  
	\ifx8#1\begin{multline*}\begin{split}#2\end{split}\end{multline*}\fi
	\ifx9#1\begin{multline}\begin{split}#2\end{split}\end{multline}\fi
}
\newcommand {\e }[1]{\eqref{#1}}
\newcommand {\lem }[1]{Lemma \ref{#1}}
\newcommand {\rem }[1]{Remark \ref{#1}}
\newcommand {\cor }[1]{Corollary \ref{#1}}

\newcommand {\trm }[1]{Theorem \ref{#1}}

\newcommand {\sect }[1]{Section \ref{#1}}

\title[] {On wavelet polynomials and Weyl multipliers}
\author{Anna Kamont}
\address{Institute of Mathematics, Polish Academy of Sciences, ul. Abrahama 18, 80–825 Sopot, Poland}
\email{anna.kamont@impan.pl}

\author{Grigori A. Karagulyan}
\address{Faculty of Mathematics and Mechanics, Yerevan State
	University, Alex Manoogian, 1, 0025, Yerevan, Armenia} 
\email{g.karagulyan@ysu.am}

\thanks{The second author is supported by the Science Committee of Armenia (the grant number will be provided later)}

\subjclass[2010]{42C05, 42C10, 42C20, 42C40}
\keywords{wavelet type systems, Franklin system, non-overlapping polynomials, Weyl multiplier, Menshov-Rademacher theorem}
\begin{document}
	\begin{abstract}
		For the wavelet type orthonormal systems $\phi_n$, we establish a new bound
			\begin{equation}
			\left\|\max_{1\le m\le n}\left|\sum_{j\in G_m}\langle f,\phi_j\rangle \phi_j\right|\right\|_p\lesssim \sqrt{\log (n+1)}\cdot \|f\|_p,\quad 1<p<\infty,
		\end{equation}
	where $G_m\subset \ZN$ are arbitrary sets of indexes.	Using this estimate, we prove that $\log n$ is an almost everywhere convergence Weyl multiplier for any orthonormal system of non-overlapping wavelet polynomials. It will also be remarked that  $\log n$ is the optimal sequence in this context.
	\end{abstract}

	\maketitle  
	\section{Introduction}
	
	\subsection{An historical overview}

	Recall some definitions well-known in the theory of orthogonal series (see \cite{KaSa}).
	\begin{definition}
		Let $\Phi=\{\phi_n:\, n=1,2,\ldots\}\subset L^2(0,1)$ be an orthonormal system. A sequence of positive numbers $\omega(n)\nearrow\infty$ is said to be an a.e. convergence Weyl multiplier (C-multiplier) if every series 
		\begin{equation}\label{a1}
			\sum_{n=1}^\infty a_n\phi_n(x),
		\end{equation}
		with coefficients satisfying the condition $\sum_{n=1}^\infty a_n^2w(n)<\infty$ is a.e. convergent. If such series converges a.e. after any rearrangement of the terms, then we say $w(n)$ is an a.e unconditional convergence Weyl multiplier (UC-multiplier) for $\Phi$.
	\end{definition}
	The Menshov-Rademacher classical theorem (\cite{Men},  \cite{Rad}, see also \cite{KaSa} or \cite{KaSt}) states that the sequence $\log^2 n$ is a C-multiplier for any orthonormal system. The sharpness of $\log^2 n$ in this theorem was proved by Menshov in the same paper \cite{Men}. That is any sequence $w(n)=o(\log^2n)$ fails to be C-multiplier for some  orthonormal system. The following inequality is the key ingredient in the proof of the Menshov-Rademacher theorem.
	\begin{OldTheorem}[Menshov-Rademacher, \cite{Men}, \cite{Rad}]\label{MR}
		If $\{\phi_k:\, \,k=1,2,\ldots,n\}\subset L^2(0,1)$ is an orthogonal system, then
		\begin{equation}\label{1}
			\left\|\max_{1\le m \le n}\left|\sum_{k=1}^m\phi_k\right|\,\right\|_2\le c\cdot\log (n+1) \left\|\sum_{k=1}^n\phi_k\right\|_2,
		\end{equation}
		where $c>0$ is an absolute constant.
	\end{OldTheorem}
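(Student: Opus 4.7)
The plan is to prove the Menshov--Rademacher inequality by the classical dyadic decomposition argument, which turns the maximum of partial sums into a sum over $O(\log n)$ dyadic scales and then uses orthogonality.

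First I would fix $N$ to be the smallest integer with $n<2^N$, so $N\asymp \log(n+1)$, and introduce the dyadic blocks
\begin{equation*}
\Delta_j^i=\{i\cdot 2^j+1,i\cdot 2^j+2,\ldots,(i+1)\cdot 2^j\},\qquad 0\le j\le N,\ 0\le i<2^{N-j},
\end{equation*}
together with the partial sums $T_j^i=\sum_{k\in\Delta_j^i\cap\{1,\ldots,n\}}\phi_k$. The key elementary combinatorial observation is that for each $m\in\{1,\ldots,n\}$, the binary expansion of $m$ allows us to write $\{1,\ldots,m\}$ as a disjoint union of at most $N+1$ of the blocks $\Delta_j^i$, namely at most one block per level $j$. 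Consequently $S_m:=\sum_{k=1}^m\phi_k$ is a sum of at most $N+1$ block sums $T_j^{i_j(m)}$.

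Next I would apply Cauchy--Schwarz to that decomposition to obtain, pointwise,
\begin{equation*}
|S_m(x)|^2\le (N+1)\sum_{j=0}^{N}|T_j^{i_j(m)}(x)|^2\le (N+1)\sum_{j=0}^{N}\max_{0\le i<2^{N-j}}|T_j^i(x)|^2.
\end{equation*}
Taking the maximum over $m$ on the left and integrating over $(0,1)$ gives
\begin{equation*}
\Bigl\|\max_{1\le m\le n}|S_m|\Bigr\|_2^2\le (N+1)\sum_{j=0}^{N}\int_0^1 \max_{i}|T_j^i|^2\le (N+1)\sum_{j=0}^{N}\sum_{i}\|T_j^i\|_2^2.
\end{equation*}
Now orthogonality does the remaining work: for each fixed level $j$, the blocks $\Delta_j^i$ are pairwise disjoint, so $\sum_i\|T_j^i\|_2^2=\sum_{k=1}^n\|\phi_k\|_2^2=\|\sum_{k=1}^n\phi_k\|_2^2$. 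Hence the right-hand side becomes $(N+1)^2\|\sum_{k=1}^n\phi_k\|_2^2$, and extracting a square root yields \eqref{1} with $c$ an absolute constant comparable to $1/\log 2$.

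There is really no serious obstacle: the proof is driven entirely by the binary-expansion decomposition of $\{1,\ldots,m\}$, and the only delicate point to state cleanly is that the blocks at a given level are disjoint (so orthogonality collapses the level-$j$ sum back to the full squared norm), which is what allows the two factors of $N+1$ to appear and then combine into a single $\log(n+1)$ after taking square roots. The argument is sharp for this method, matching the well-known optimality of $\log n$ on the right-hand side of \eqref{1}.
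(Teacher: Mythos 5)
Your proof is correct and is precisely the classical bisection/dyadic argument for the Menshov--Rademacher inequality (the one found in Kashin--Saakyan): the binary decomposition of $\{1,\dots,m\}$ into at most one block per level, Cauchy--Schwarz across the $O(\log n)$ levels, and Pythagoras within each level are all carried out accurately, and the two factors of $N+1$ combine as you say. The paper itself states this result as Theorem~A without proof, citing \cite{Men} and \cite{Rad}, so there is nothing to compare against beyond noting that your argument is the standard one those sources and textbooks use.
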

	Similarly, the counterexample of Menshov is based on the following results. 
	\begin{OldTheorem}[Menshov, \cite{Men}]\label{M}
		For any natural number $n\in\ZN$ there exists an orthonormal system $\{\phi_k:\, k=1,2,\ldots,n\}\subset L^2(0,1)$, such that
		\begin{equation}
			\left|\left\{x\in(0,1):\,\max_{1\le m \le n}\left|\frac{1}{\sqrt{n}}\sum_{k=1}^m\phi_k(x)\right|\ge c\log (n+1)\right\}\right|\gtrsim 1,
		\end{equation}
		for an absolute constant $c>0$.
	\end{OldTheorem}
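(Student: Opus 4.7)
The plan is to prove Theorem~B by an explicit construction, reducing the analytic statement to a combinatorial one about orthogonal matrices. Concretely, I aim to exhibit for each $n$ an $n\times n$ real orthogonal matrix $U=(u_{k,j})$ and a set $E\subset\{1,\ldots,n\}$ with $|E|\gtrsim n$ such that
\[
\max_{1\le m\le n}\Bigl|\sum_{k=1}^m u_{k,j}\Bigr|\;\gtrsim\;\log(n+1)\qquad \text{for every } j\in E.
\]
Given such a $U$, I partition $(0,1)$ into $n$ equal subintervals $I_1,\ldots,I_n$ and define $\phi_k(x):=\sqrt{n}\,u_{k,j}$ for $x\in I_j$. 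Orthonormality in $L^2(0,1)$ is equivalent to $UU^\top=I$, and for $x\in I_j$ one has the identity $\frac{1}{\sqrt n}\sum_{k=1}^m\phi_k(x)=\sum_{k=1}^m u_{k,j}$. Consequently the exceptional set in the theorem contains $\bigcup_{j\in E}I_j$, whose measure is $\gtrsim 1$, and the result follows.

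For the construction of $U$ I follow the classical Menshov layout. Fix $m=\lfloor\log_2(n+1)\rfloor$ and, for convenience, write $n=mq+m$ (the general case is handled by a routine padding). I designate $m$ disjoint "resonant" blocks of columns $B_1,\ldots,B_m$ of cardinality $q$ each, and reserve an additional column inside each block as a "sign carrier". The first $m$ rows of $U$ are built as shifted $\pm$-step functions normalized by $1/\sqrt n$: row $k$ is chosen to take a positive constant value on $B_1\cup\cdots\cup B_k$ and a compensating negative value on a disjoint set, the shifts being arranged in a staircase so that on a chosen sub-block $E_k\subset B_k$ of size $\approx q$ the partial sums $\sum_{\ell=1}^{k}u_{\ell,j}$ all have the same sign and grow linearly in $k$. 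The remaining $n-m$ rows are obtained by Gram-Schmidt inside the orthogonal complement of the first $m$ rows, choosing basis vectors supported off the union $E:=\bigcup_k E_k$ of resonant columns so that they contribute zero to every partial column sum over $E$.

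Once $U$ is in place, the verification is a direct computation: on each column $j\in E_k$ the partial sum up to row $\ell\le m$ equals a constant multiple of $\min(\ell,k)/\sqrt n$, so that $\max_m|\sum_{k=1}^m u_{k,j}|\gtrsim m/\sqrt n\gtrsim \log(n+1)/\sqrt n$, which after multiplication by $\sqrt n$ in the definition of $\phi_k$ gives the desired pointwise lower bound on $\bigcup_{j\in E}I_j$, a set of measure $\gtrsim 1$. The main obstacle in this scheme is the combinatorial bookkeeping of the staircase shifts: one must simultaneously ensure (i) mutual orthogonality of the first $m$ rows, (ii) alignment of signs across all $m$ rows on a set of columns of cardinality $\gtrsim q\sim n/\log n$, and (iii) that the Gram-Schmidt completion of the complementary $n-m$ rows can be made orthogonal to the characteristic functions of $E$ without destroying orthonormality elsewhere. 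All three can be arranged simultaneously by Menshov's original sliding construction, where the geometric $\log n$ amplification arises from exactly $m\sim\log n$ consecutive sign agreements engineered by the staircase.
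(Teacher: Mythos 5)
First, a point of comparison: the paper does not prove Theorem~B at all --- it is quoted as a classical result of Menshov with a citation to \cite{Men} --- so there is no in-paper argument to measure you against, and your attempt has to stand on its own. Your reduction to an $n\times n$ orthogonal matrix (piecewise-constant functions on $n$ equal intervals, orthonormality $\Leftrightarrow UU^{\top}=I$, and the identity $\frac{1}{\sqrt n}\sum_{k\le \ell}\phi_k(x)=\sum_{k\le \ell}u_{k,j}$ on $I_j$) is correct and is indeed the standard first step.

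The construction itself, however, cannot work. Your requirement (iii) forces the last $n-m$ rows to vanish on the resonant columns $E$, so for $j\in E$ the $j$-th column of $U$ --- a unit vector, since $U$ is orthogonal --- is supported on the first $m=\lfloor\log_2(n+1)\rfloor$ rows. Cauchy--Schwarz then gives, for every $\ell$,
\[
\Bigl|\sum_{k=1}^{\ell}u_{k,j}\Bigr|=\Bigl|\sum_{k=1}^{\min(\ell,m)}u_{k,j}\Bigr|\le \sqrt{m}\,\Bigl(\sum_{k=1}^{n}u_{k,j}^{2}\Bigr)^{1/2}\le\sqrt{\log_2(n+1)},
\]
so no staircase of shifts can produce partial sums of order $\log(n+1)$ on $E$: your architecture is capped at $\sqrt{\log n}$, which is the Nikishin--Ul\cprime yanov/Olevskii-type lower bound mentioned in the introduction, a full power short of Menshov's bound. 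Your own arithmetic already signals this: you obtain $\max_{\ell}|\sum_{k\le\ell}u_{k,j}|\gtrsim m/\sqrt n$ and then ``multiply by $\sqrt n$'', but that factor is spurious --- the $\sqrt n$ in $\phi_k=\sqrt n\,u_{k,j}$ is exactly cancelled by the $1/\sqrt n$ in the statement via the identity you yourself wrote down, so you are counting it twice. To reach $\log n$ each bad column needs at least $\log^2 n$ (Menshov in fact uses on the order of $n$) nonvanishing, sign-coherent entries with harmonically decaying sizes: the classical construction is modelled on a Hilbert-matrix profile $u_{k,j}\approx c/(j-k)$, so that $\sum_{k<j}u_{k,j}\sim\sum_{d\le n}1/d\sim\log n$, and the real work is correcting such a kernel to an exactly orthogonal matrix. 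The logarithm comes from the divergence of the harmonic series over $\sim n$ terms, not from $\log n$ consecutive sign agreements among entries of size $n^{-1/2}$.
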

\trm{M} shows that the factor $\log (n+1)$ is optimal in inequality \e{1} when the class of all orthogonal systems is considered. However, if we narrow the class of 
orthogonal systems, it is of interest to find the best constant in inequality \e{1} for the class under consideration. 

For the further discussion, we need to introduce some notation.
The relation $a\lesssim b$ ($a\gtrsim b$) will stand for the inequality $a\le c\cdot b$ ($a\ge c\cdot b$), where $c>0$ is an absolute constant. Given two sequences of positive numbers $a_n,b_n>0$, we write $a_n\sim b_n$ if we have $c_1\cdot a_n\le b_n\le c_2\cdot a_n$, $n=1,2,\ldots$ for some constants $c_1, c_2>0$.  Throughout the paper, the base of $\log$ is equal $2$.
	
	Let $\Phi=\{\phi_k(x),\,k=1,2,\ldots\}\subset L^\infty(0,1) $ be an infinite orthonormal system.  Given $g\in L^1(0,1)$, consider the Fourier coefficients
	\begin{equation*}
		a_n=\langle g,\phi_n\rangle=\int_0^1g\phi_n.
	\end{equation*} 
	Let $f$ be a $\Phi$-polynomial (i.e. a finite linear combination of functions of $\Phi$). Then we say $f$ satisfies the relation $f\prec g$ (with respect to the system $\Phi$), if $f=\sum_{n=1}^m \lambda_na_n\phi_n$, where $|\lambda_n|\le 1$.
	Given orthonormal system $\Phi$ and $1<p<\infty$ we consider the numerical sequence
	\begin{equation}\label{u33}
		\ZA_n^p(\Phi)=\sup_{\|g\|_p\le 1}\,\sup_{g_k\prec g,\, k=1,\ldots,n}\left\|\max_{1\le k\le n}|g_k(x)|\right\|_p,
	\end{equation}
	where the second $\sup$ is taken over all sequences of polynomials $g_k$, $k=1,2,\ldots,n,$ satisfying $g_k\prec g$.
	One can consider in \e{u33} only the monotonic sequences of polynomials
	\begin{equation*}
		g_k=\sum_{j\in G_k}\langle g,\phi_k\rangle\phi_j\prec g,\quad k=1,2,\ldots, n,
	\end{equation*}
	where $G_1\subset G_2\subset \ldots \subset G_n\subset \ZN$, $\#G_n<\infty$. Then we will get another sequence $\ZA^p_{n,\text{mon}}(\Phi)$. If we additionally suppose that
	each $G_{k+1}\setminus G_k$ consists of a single integer, then we will have the sequence $\ZA^p_{n,\text{sng}}(\Phi)$. Clearly we have
	\begin{equation}\label{1-2}
		\ZA^p_{n,\text{sng}}(\Phi)\le\ZA^p_{n,\text{mon}}(\Phi)\le\ZA^p_n(\Phi).
	\end{equation}
	The case of $p=2$ is special for \e{u33}. \trm{MR} implies $\ZA^2_{n,\text{mon}}(\Phi)\lesssim \log (n+1)$  for every orthonormal system $\Phi$.
	On the other hand, applying \trm{M}, one can also construct an infinite orthonormal system with the lower bound $\ZA^2_{n,\text{sng}}(\Phi)\gtrsim \log (n+1)$, $n=1,2,\ldots $. Thus we conclude that for the general orthonormal systems the logarithmic upper bound of $\ZA^2_{n,\text{mon}}(\Phi)$  is optimal. As it was remarked in \cite{Kar1} from some results of Nikishin-Ulyanov \cite{NiUl} and Olevskii \cite{Ole}  it follows that $\ZA^2_{n,\text{mon}}(\Phi)\gtrsim \sqrt{\log (n+1)}$ for any complete orthonormal system $\Phi$. 
	
	The recent papers of one of the authors \cite{Kar1, Kar2, Kar3} highlight the relation of sequences \e{1-2} in the study of almost everywhere convergence of special orthogonal series. It was proved in \cite{Kar1} that
	\begin{OldTheorem}\label{OT1}
		If $\Phi$ is a martingale difference, then  $\ZA^2_{n,\text{mon}}(\Phi)\lesssim \sqrt{\log (n+1)}$.
	\end{OldTheorem}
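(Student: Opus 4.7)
My approach is to pass through $L^p$ with $p\sim\log n$ and to exploit the martingale-difference structure via the Burkholder--Gundy square-function inequality. The key structural observation is that, after relisting the (finitely many) elements of $G_n$ in their natural increasing order $l_1<l_2<\cdots<l_M$, the subsequence $(\phi_{l_i})$ remains a martingale difference (by the tower property applied to the sub-filtration $\mathcal F_{l_i}$). Consequently each $g_k=\sum_{l\in G_k}\langle g,\phi_l\rangle \phi_l$ is the terminal value of a finite martingale, whose differences $\langle g,\phi_{l_i}\rangle \phi_{l_i}$ are arranged in increasing order of $l_i$; this is what lets the square-function machinery act on $g_k$ even though the ``time'' $k$ need not be compatible with the martingale order.

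Granted this, the three analytic steps are as follows. First, apply Burkholder--Gundy to the martingale $g_k$: for $p\ge 2$,
\[
\|g_k\|_p \le C\sqrt{p}\,\|S(g_k)\|_p,\qquad S(g_k)=\Bigl(\sum_{l\in G_k}\langle g,\phi_l\rangle^2\phi_l^2\Bigr)^{1/2}.
\]
Second, bound the square function pointwise by a multiple of $\|g\|_2$: since $S(g_k)\le S(g_n)$ pointwise and (using a uniform bound $B=\sup_l\|\phi_l\|_\infty$ on the system)
\[
S(g_n)(x)\le B\Bigl(\sum_{l\in G_n}\langle g,\phi_l\rangle^2\Bigr)^{1/2}\le B\|g\|_2,
\]
one obtains $\|g_k\|_p\le CB\sqrt{p}\,\|g\|_2$. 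Third, use $\max_k |g_k|^p\le\sum_k |g_k|^p$ to pass to the maximum:
\[
\Big\|\max_{1\le k\le n}|g_k|\Big\|_p\le n^{1/p}\,CB\sqrt{p}\,\|g\|_2.
\]
Choosing $p=2\log(n+1)$ makes $n^{1/p}\le\sqrt e$, and therefore $\|\max_k|g_k|\|_2\le\|\max_k|g_k|\|_p\lesssim \sqrt{\log(n+1)}\,\|g\|_2$, as desired.

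The main obstacle, and the place where the hypothesis on $\Phi$ enters non-trivially, is the pointwise square-function bound in Step~2. If $\Phi\subset L^\infty$ only term-by-term (with no uniform bound on $\|\phi_l\|_\infty$), then $\|S(g_n)\|_p$ must be controlled by a more indirect route; combining the BDG equivalence $\|S(g_n)\|_p\sim\|g_n\|_p$ with Burkholder's multiplier inequality $\|g_n\|_p\lesssim p\,\|g\|_p$ costs an extra factor of $p$ and destroys the $\sqrt{\log n}$ scaling. For the wavelet-type systems the paper is ultimately concerned with the underlying martingale differences are uniformly bounded, so the scheme above applies directly and yields the asserted $\sqrt{\log(n+1)}$ growth.
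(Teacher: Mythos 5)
Your overall scheme (pass to $L^p$ with $p\sim\log n$, use a square-function inequality with constant $O(\sqrt p)$, and absorb the maximum by the factor $n^{1/p}$) is the standard dual of a good-$\lambda$ argument, and Steps 1 and 3 are fine in principle. The proof breaks at Step 2. The theorem concerns an arbitrary orthonormal martingale difference system $\Phi\subset L^\infty(0,1)$ with no uniform bound on $\|\phi_l\|_\infty$; the canonical example, the $L^2$-normalized Haar system, has $\|h_k\|_\infty=2^{n/2}\to\infty$, so your constant $B=\sup_l\|\phi_l\|_\infty$ is infinite and the pointwise bound $S(g_n)\le B\|g\|_2$ is vacuous. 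Your closing remark that the martingale differences underlying the systems of interest are uniformly bounded is therefore not correct. The gap is not cosmetic: without the pointwise bound your argument needs $\|S(g_k)\|_{p}\lesssim\|g\|_2$ for $p\sim 2\log(n+1)$, and since the definition of $\ZA^2_{n,\text{mon}}$ only controls $\|g\|_2\le 1$, no such high-moment bound on the square function is available ($\|S(g)\|_p\sim\|g\|_p$ can be arbitrarily large compared with $\|g\|_2$).

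What the paper does (following \cite{Kar1}; see the analogous computation in Section \ref{S5}) is keep the square function in $L^2$ and let the Chang--Wilson--Wolff good-$\lambda$ inequality \e{CWW} perform the task you assigned to the $L^p$ step. Since $g_k\prec g$, one has the pointwise domination $S(g_k)\le S(g)$, hence $\left\|\sup_kS(g_k)\right\|_2\le\|S(g)\|_2\le\|g\|_2$ --- an $L^2$ bound, which is all one can hope for. Applying \e{CWW} with $\varepsilon_n\sim(\log n)^{-1/2}$, the factor $\exp(-c/\varepsilon_n^2)\sim 1/n$ absorbs the union over $k=1,\dots,n$ on the set where $S(g_k)<\varepsilon_n\lambda$, while the complementary set $\{\sup_kS(g_k)\ge\varepsilon_n\lambda\}$ is handled by Chebyshev and contributes $\varepsilon_n^{-2}\|g\|_2^2\sim\log n\cdot\|g\|_2^2$. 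If you try to salvage your route you will be forced to localize to the set where the square function is small, at which point you have rederived the good-$\lambda$ argument. A secondary caveat: the constant $O(\sqrt p)$ in $\|f\|_p\le C\sqrt p\,\|S(f)\|_p$ is clear for dyadic or conditionally symmetric martingales (it is essentially equivalent to the subgaussian estimate behind \e{CWW}), but for completely general martingale difference sequences you should cite or prove it rather than assert it.
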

	\begin{OldTheorem}\label{OT2}
		For any generalized Haar system $\ZH$ we have the relation 
		\begin{equation}\label{x23}
			\ZA^2_{n,\text{sng}}(\ZH)\sim \ZA^2_{n,\text{mon}}(\ZH) \sim \sqrt{\log (n+1)}.
		\end{equation}
	\end{OldTheorem}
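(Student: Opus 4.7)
The plan is to prove the two-sided equivalence in two stages: an almost-free upper bound derived from \otrm{OT1}, and a direct lower bound obtained from the cited Nikishin--Ulyanov--Olevskii construction together with a refinement from monotonic to singleton chains.

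\textbf{Upper bound.} Any generalized Haar system $\ZH=\{\phi_j\}$ is a martingale difference sequence with respect to the filtration $\{\mathcal{F}_k\}$ generated by the supports of $\phi_1,\ldots,\phi_k$: indeed, each $\phi_{k+1}$ is supported on, and has mean zero on, a single atom of $\mathcal{F}_k$. Hence \otrm{OT1} applies and gives $\ZA^2_{n,\text{mon}}(\ZH)\lesssim \sqrt{\log(n+1)}$. Combining with \e{1-2} yields the same upper bound for $\ZA^2_{n,\text{sng}}(\ZH)$.

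\textbf{Lower bound.} Since $\ZH$ is complete in $L^2(0,1)$, the Nikishin--Ulyanov--Olevskii estimate recalled before \otrm{OT1} gives the monotonic lower bound $\ZA^2_{n,\text{mon}}(\ZH)\gtrsim \sqrt{\log(n+1)}$. To obtain the matching singleton lower bound, I would take an extremizer $(g,g_1,\ldots,g_n)$ with $G_1\subset\cdots\subset G_n$ achieving the monotonic bound and refine it to a singleton chain by adding the elements of $G_k\setminus G_{k-1}$ one at a time in an arbitrary order. The running maximum of the refined chain dominates $\max_k|g_k|$ pointwise, so the $L^2$ ratio is preserved. The refined chain has length $N=|G_n|$; since the Nikishin--Ulyanov--Olevskii construction keeps $N$ polynomial in $n$, we have $\sqrt{\log N}\sim \sqrt{\log n}$, and after relabeling the index we recover $\ZA^2_{n,\text{sng}}(\ZH)\gtrsim \sqrt{\log(n+1)}$.

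\textbf{Main obstacle.} The technical heart of the argument lies in the Nikishin--Ulyanov--Olevskii-type lower bound for complete orthonormal systems, which produces a Menshov-style packet construction inside the tree structure of $\ZH$. The passage from monotonic to singleton is essentially cost-free, provided one can control the final cardinality $N=|G_n|$ polynomially in $n$; this bookkeeping, rather than any analytic estimate, is the only subtlety in the refinement step. An alternative route that avoids it altogether would be to construct an extremizer directly in singleton form on the tree of Haar supports, at the cost of a more involved combinatorial design of the packets.
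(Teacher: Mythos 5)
First, a framing note: \otrm{OT2} is quoted in this paper as a prior result from \cite{Kar1}; the present paper contains no proof of it, so I am comparing your proposal against the route the paper itself points to (namely \otrm{OT1} for the upper bound and the Nikishin--Ulyanov/Olevskii results for the lower bound, cf.\ the proof of Corollary \ref{C02}). Your upper bound is correct and is the intended one: a generalized Haar system is a martingale difference sequence for its natural filtration, so \otrm{OT1} gives $\ZA^2_{n,\text{mon}}(\ZH)\lesssim\sqrt{\log(n+1)}$, and \e{1-2} passes this down to $\ZA^2_{n,\text{sng}}(\ZH)$. The monotone lower bound via completeness is also fine (modulo the standing assumption that the generalized Haar system is complete, i.e.\ that the splitting points are dense in $[0,1]$ -- worth stating, since you invoke completeness explicitly).

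The genuine gap is in your passage from the monotone to the singleton lower bound. Refining an extremal chain $G_1\subset\cdots\subset G_n$ to a singleton chain produces a chain of length $N=|G_n|\ge n$, hence a lower bound for $\ZA^2_{N,\text{sng}}$ at the \emph{larger} index $N$; since $m\mapsto\ZA^2_{m,\text{sng}}$ is nondecreasing, this is a priori weaker than what is needed at index $n$. To repair it one must apply the monotone bound at an index $m$ with $N(m)\le n$ and $\log m\gtrsim\log n$, which is precisely your claim that $N(m)\le m^{O(1)}$. But that claim is not part of the statement $\ZA^2_{m,\text{mon}}(\Phi)\gtrsim\sqrt{\log(m+1)}$ as quoted from \cite{NiUl} and \cite{Ole}: used as a black box it gives no control whatsoever on the cardinality of the extremal index set, and nothing in this paper supplies such control. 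So the step ``after relabeling the index we recover $\ZA^2_{n,\text{sng}}(\ZH)\gtrsim\sqrt{\log(n+1)}$'' is unsupported as written. Note that the paper sidesteps exactly this point in the proof of Corollary \ref{C02} by citing the singleton form of the lower bound for the Haar system directly from \cite{NiUl}, i.e.\ the underlying construction is already a rearranged partial-sum (singleton) construction; a self-contained argument would require building the Menshov-type packets directly in singleton form on the dyadic tree, as you suggest in your closing sentence -- and that construction, not the bookkeeping, is the real content of the lower bound.
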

	The paper \cite{Kar1} also provides corollaries of these results like those that will be considered below.
	In the case of trigonometric system \cite{Kar2, Kar3} it was proved the following.
	\begin{OldTheorem}
		If $\zT$ is the trigonometric system, then we have 
		\begin{equation}\label{x6}
			\ZA^2_{n,\text{sng}}(\zT)\sim \ZA^2_{n,\text{mon}}(\zT) \sim \log (n+1).
		\end{equation}
	\end{OldTheorem}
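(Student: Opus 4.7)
By \e{1-2} the chain of $\sim$ relations in \e{x6} splits into two estimates: the upper bound $\ZA^2_{n,\text{mon}}(\zT)\lesssim\log(n+1)$ (which automatically gives the same upper bound for $\ZA^2_{n,\text{sng}}(\zT)$) and the matching lower bound $\ZA^2_{n,\text{sng}}(\zT)\gtrsim\log(n+1)$ (which automatically gives the corresponding lower bound for $\ZA^2_{n,\text{mon}}(\zT)$). The upper estimate is not special to $\zT$: for \emph{any} orthonormal system and any $g$ with $\|g\|_2\le1$ and monotone filtration $G_1\subset\cdots\subset G_n$, the block differences $T_k:=g_k-g_{k-1}=\sum_{j\in G_k\setminus G_{k-1}}\langle g,\phi_j\rangle\phi_j$ are pairwise $L^2$-orthogonal (they live over disjoint index sets) and satisfy $\|\sum_{k}T_k\|_2=\|g_n\|_2\le\|g\|_2$ by Bessel, so \trm{MR} applied to $\{T_1,\dots,T_n\}$ yields $\|\max_{k}|g_k|\|_2\lesssim\log(n+1)$.

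The whole substance of the theorem therefore lies in the lower bound $\ZA^2_{n,\text{sng}}(\zT)\gtrsim\log(n+1)$, and the plan here is to transplant the Menshov counterexample (\trm{M}) into the trigonometric framework. Fix an orthonormal Menshov system $\psi_1,\dots,\psi_N\in L^2(0,1)$ from \trm{M} realizing $\max_{m}|N^{-1/2}\sum_{k\le m}\psi_k|\gtrsim\log N$ on a set of measure $\gtrsim 1$. The task is to produce trigonometric polynomials $P_1,\dots,P_N$ with pairwise \emph{disjoint} frequency supports $F_k\subset\ZN$ such that $N^{-1/2}|\sum_{k\le m}P_k|$ continues to enjoy a $\log N$ pointwise lower bound on a set of positive measure. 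Disjointness of the $F_k$ makes the $P_k$ automatically orthogonal, and listing the elements of $F_1,F_2,\dots$ block by block (each block in any fixed order) supplies a singleton-incremental filtration $G_1\subsetneq G_2\subsetneq\cdots$ along which the partial sums at the block boundaries agree with $N^{-1/2}\sum_{k\le m}P_k$. The candidate $g:=N^{-1/2}\sum_{k}P_k$ is then a trigonometric polynomial with $\|g\|_2\le1$, and together with this filtration it witnesses the desired lower bound.

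The main obstacle is the construction of the blocks $P_k$: the naive approach of $L^2$-approximating each $\psi_k$ by a trigonometric polynomial and then modulating by $e^{2\pi iM_kx}$ to disjoin the frequency supports is doomed, because the modulation destroys the coherent phase alignment that underlies Menshov's $\log N$ peaks. One should instead build the blocks intrinsically in the frequency variable --- for example as $L^2$-normalized translated Dirichlet kernels
\begin{equation*}
	P_k(x)=|S_k|^{-1/2}\sum_{j\in S_k}e^{-2\pi ijx_k}e^{2\pi ijx},
\end{equation*}
supported on disjoint arithmetic progressions $S_k\subset\ZN$ and concentrated at prescribed centers $x_k\in(0,1)$ with peak height $|S_k|^{1/2}$. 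The centers $x_k$ and sizes $|S_k|$ should be arranged dyadically, mirroring Menshov's nested-interval scheme, and the technical core is then a coherent-reinforcement lemma: along a well-chosen subsequence of block-ending partial sums, logarithmically many of these translated Dirichlet peaks must add up constructively at common points, producing peaks of order $\log N$ in $|g_m|$ on a set of measure $\gtrsim 1$. This reinforcement step, which is where the phase structure of $\zT$ is genuinely exploited (and which separates $\zT$ from the wavelet-type systems of \otrm{OT2} where only $\sqrt{\log(n+1)}$ is forced), is the principal technical hurdle of the proof.
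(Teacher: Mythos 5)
You should first be aware that the paper contains no proof of this statement: it is quoted as a prior result of one of the authors, established in \cite{Kar2} and \cite{Kar3}, and the only proof content offered in the present paper is the remark that the upper bound $\ZA^2_{n,\text{mon}}(\zT)\lesssim\log(n+1)$ is a consequence of \trm{MR}, while ``the novelty is the estimate $\ZA^2_{n,\text{sng}}(\zT)\gtrsim\log(n+1)$.'' Your treatment of the upper half is correct and is exactly the intended argument: the block differences $T_k=g_k-g_{k-1}$ form an orthogonal system, $\|\sum_k T_k\|_2=\|g_n\|_2\le\|g\|_2$ by Bessel, and \trm{MR} gives $\|\max_k|g_k|\|_2\lesssim\log(n+1)$; together with \e{1-2} this settles both upper bounds in \e{x6}.

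The difficulty is that the lower bound $\ZA^2_{n,\text{sng}}(\zT)\gtrsim\log(n+1)$ --- which, as the paper itself emphasizes, is the entire substance of the theorem --- is not proved in your proposal. What you give is a construction scheme (translated Dirichlet kernels on disjoint arithmetic progressions, centers arranged dyadically) followed by an explicit deferral of the decisive ``coherent-reinforcement lemma'' as ``the principal technical hurdle.'' That lemma is not a routine verification; it is precisely the content of \cite{Kar2, Kar3}, where the construction occupies the bulk of the paper. In addition, your sketch leaves an essential piece of bookkeeping unaddressed: in $\ZA^2_{n,\text{sng}}$ each step of the filtration adjoins a single frequency, so the parameter $n$ in $\log(n+1)$ is the \emph{total number of frequencies} $\sum_k\#S_k$, not the number $N$ of blocks. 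A peak of height $\gtrsim\log N$ therefore yields the claimed bound only if $n$ is at most polynomial in $N$, i.e.\ the block sizes $\#S_k$ must grow at most polynomially in $N$; this constraint competes with the need for $L^2$-normalized blocks whose peaks reinforce to height $\log N$ on a set of measure $\gtrsim 1$, and nothing in the proposal shows the two requirements can be met simultaneously. As it stands, the proposal establishes the easy half of \e{x6} and presents the hard half as a plausible plan rather than a proof, so there is a genuine gap.
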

	Note that the upper bound $\ZA^2_{n,\text{mon}}(\zT) \lesssim \log (n+1)$ in \e{x6} follows from the Menshov-Rademacher theorem. So the novelity here is the estimate $\ZA^2_{n,\text{sng}}(\zT)\gtrsim \log (n+1)$, which shows that the trigonometric system has no better estimate of the sequences $\ZA^2_n$ than the general orthonormal systems have.

\subsection{The statement of the new results}	
	In this paper we will consider orthonormal systems of wavelet type $\Phi=\{\phi_k(x):\,k=1,2,\ldots\}$ defined on the interval $[0,1]$. For the precise definition of such systems we use the function
	\begin{equation}\label{u51}
		\xi(x)=\frac{1}{(1+|x|)^{1+\delta}},\quad 0<\delta<1,
	\end{equation}
	as well as the notations 
	\begin{align}
		&t_1=\frac{1}{2},\quad t_k=\frac{2j-1}{2^{n+1}},\quad k\ge 2,\\
		&\text{where }	k=2^n+j,\, 1\le j\le 2^n,\,n=0,1,2,\ldots.\label{u52}
	\end{align}
	Hence we suppose 
	\begin{align}
		&\int_0^1\phi_k(t)dt=0,\quad k\ge 1,\label{u65}\\
		&|\phi_k(x)|\le c2^{n/2}\cdot \xi\big(2^n(x-t_k)\big),\label{u43}\\
		&|\phi_k(t)-\phi_k(t')|\le c2^{n/2}(2^n|t-t'|)^\alpha \cdot \xi\big(2^n(t-t_k)\big)\text { if }|t-t'|\le 2^{-n}, \label{u44}
	\end{align}
	where $0<\alpha\le 1$.  Recall that such wavelet type systems in an other context were considered in \cite{Mul}. Note that the Fourier series of any function $f\in L^p$, $1<p<\infty$, in a wavelet type system 
	converges in $L^p$ unconditionally (see \rem{rem}). This fact will be used in the statements of some results below. Besides, in the definition of $\ZA_n^p(\Phi)$ instead of a $\Phi$-polynomial $f$ we can equivalently consider $f\in L^p$.
	Examples of wavelet type systems are considered in \rem{rem1} below.
	
	From this moment the constants occurring in the relations $\lesssim$ and $\sim$ may depend on the constants form \e{u43}, \e{u44} and the parameter  $1<p<\infty$. The main result of the paper is the following sharp estimate.
	\begin{theorem}\label{T1}
		If an orthonormal system $\Phi$ satisfies \e{u65}-\e{u44}, then for any $1<p<\infty$ it holds the relation
		\begin{equation}\label{a2}
			\ZA^p_{n}(\Phi)\lesssim \sqrt{\log (n+1)}, \quad n=1,2,\ldots.
		\end{equation}
	\end{theorem}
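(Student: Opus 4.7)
The strategy combines three ingredients: the $L^p$-unconditionality of wavelet type bases, a pointwise scale-localization converting each wavelet projection into a one-dimensional Haar-like sum indexed by scale, and an extension of \otrm{OT2} from monotone to arbitrary index sets.

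First, writing any $[-1,1]$-valued multiplier as the difference of two $[0,1]$-valued ones and expressing each $[0,1]$-valued multiplier $\lambda_{n,k}$ as $\int_0^1\mathbf{1}_{\{t\le\lambda_{n,k}\}}\,dt$, Minkowski's inequality reduces the statement to the case $g_k=P_{G_k}g:=\sum_{j\in G_k}\langle g,\phi_j\rangle\phi_j$ with arbitrary $G_k\subset\mathbb{N}$. By \eqref{u43}--\eqref{u44}, the system $\Phi$ is an unconditional basis of $L^p(0,1)$ for $1<p<\infty$, with the Littlewood--Paley equivalence $\|g\|_p\sim\|Sg\|_p$, where $Sg(x)=\bigl(\sum_n|a_n|^2|\phi_n(x)|^2\bigr)^{1/2}$ and $a_n=\langle g,\phi_n\rangle$.

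The core of the argument is a pointwise scale-by-scale decomposition. Grouping indices by level $n_j=\ell$ and letting $j^*(x,\ell)$ denote the wavelet at level $\ell$ whose center $t_{j^*}$ is closest to $x$, the decay \eqref{u43} yields
$$P_{G_k}g(x)=\sum_{\ell\ge 0}c_\ell(x)\,\mathbf{1}_{G_k}\!\bigl(j^*(x,\ell)\bigr)+R_k(x),$$
where $c_\ell(x):=a_{j^*(x,\ell)}\phi_{j^*(x,\ell)}(x)$ and the remainder $R_k$ is dominated, uniformly in $k$, by a convolution-type maximal operator applied to the wavelet envelope. The key point is that $\sum_\ell|c_\ell(x)|^2\lesssim Sg(x)^2$.

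For fixed $x$, the main term is a partial sum of the one-dimensional sequence $(c_\ell(x))_\ell$ along the arbitrary subset $\widetilde G_k(x):=\{\ell:j^*(x,\ell)\in G_k\}\subset\mathbb{N}$. Invoking an arbitrary-subset analogue of \otrm{OT2} for such Haar-type sequences (derived from the monotone case via a dyadic-tree decomposition of $\widetilde G_k(x)$), one obtains
$$\left\|\max_{1\le k\le n}\Bigl|\sum_{\ell\in\widetilde G_k(x)}c_\ell(x)\Bigr|\right\|_{L^p(dx)}\lesssim\sqrt{\log(n+1)}\,\|(c_\ell)\|_{L^p(\ell^2)}\lesssim\sqrt{\log(n+1)}\,\|g\|_p.$$
The remainder contribution $\|\max_k|R_k|\|_p$ is absorbed using the Hardy--Littlewood maximal inequality combined with the H\"older continuity \eqref{u44}.

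The principal obstacle is the arbitrary-subset extension of \otrm{OT2}: the original statement handles only monotone collections, and preserving the $\sqrt{\log n}$ dependence for arbitrary subsets seems to require a careful dyadic tree decomposition combined with $L^2$-orthogonality at each scale, organized so as to lose only a single factor of $\sqrt{\log n}$ in the Menshov--Rademacher scheme. A secondary technical difficulty is the uniform pointwise control of the tails $R_k$, for which the decay rate $\delta>0$ from \eqref{u51} and the H\"older exponent $\alpha$ from \eqref{u44} play essential roles.
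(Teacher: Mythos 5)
There are two genuine gaps here, and together they bypass the actual engine of the proof. First, your remainder estimate fails: at a fixed level $\ell$ the sum over the non-closest wavelets $j\ne j^*(x,\ell)$ is comparable in size to the main term (the decay in \e{u43} gives only $(1+|j-j^*|)^{-1-\delta}$, which does not make the off-center part negligible), and to control $\max_k|R_k|$ uniformly over arbitrary $G_k$ you are forced to bound $\sum_{\ell}\sum_{j\ne j^*}|a_j\phi_j(x)|$, an $\ell^1$ sum over scales of level blocks. Only the $\ell^2$ sum over scales is in $L^p$ (that is the Littlewood--Paley bound \e{u58}); the $\ell^1$ sum is essentially the majorant $\sum_j|a_j\phi_j(x)|$, whose unboundedness in $L^p$ is precisely why these theorems are nontrivial. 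So the remainder is not absorbed by the Hardy--Littlewood maximal inequality. Second, the step you yourself flag as the principal obstacle --- an arbitrary-subset analogue of \otrm{OT2} losing only $\sqrt{\log n}$ --- is the whole theorem in disguise, and the proposed route (dyadic tree decomposition plus orthogonality in a Menshov--Rademacher scheme) yields $\log n$, not $\sqrt{\log n}$; moreover, for fixed $x$ the numbers $c_\ell(x)$ carry no orthogonality or martingale structure in $x$, so there is nothing for \otrm{OT2} to act on. The square-root gain for arbitrary subsets cannot come from combinatorics of the index sets alone; it comes from a sub-Gaussian estimate of the partial sums relative to a square function.

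That is exactly what the paper does instead. The key analytic input is the Chang--Wilson--Wolff good-$\lambda$ inequality \e{CWW}, which compares the dyadic maximal function of $p_k$ with its \emph{Haar} square function $S(p_k)$, and the main technical work (\lem{L4}, built on Lemmas \ref{L1}, \ref{L2} comparing $\Delta H_n$ with $\Delta\Phi_m$ in the two regimes $n>m$ and $n\le m$) shows that $\sup_{|\lambda_k|\le 1}S\bigl(\sum_k\lambda_k a_k\phi_k\bigr)$ is bounded in $L^p$ by $\|f\|_p$. With that uniform majorant $\ZP$ in hand, choosing $\varepsilon_n\sim(\log n)^{-1/2}$ in \e{CWW} makes the exceptional set summable over the $n$ polynomials while the main term contributes $\varepsilon_n^{-1}\|\ZP\|_p\lesssim\sqrt{\log(n+1)}\,\|f\|_p$. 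If you want to salvage your plan, you would need to replace the appeal to \otrm{OT2} by such a good-$\lambda$ (sub-Gaussian) estimate for your sequence $(c_\ell(x))$ and replace the pointwise closest-wavelet splitting by a comparison of the full wavelet level blocks with Haar level blocks in the style of \e{u59} and Lemmas \ref{L1}--\ref{L2}; at that point you have reconstructed the paper's argument.
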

Note that in view of \e{1-2}, a likewise bound holds also for the sequences $\ZA^p_{n,\text{mon}}(\Phi)$ and $\ZA^p_{n,\text{sgn}}(\Phi)$. The following two corollaries show that for a wide class of wavelet type systems bound \e{a2} is sharp. Namely,
\begin{corollary}\label{C01}
	If an orthonormal system $\Phi$ satisfies \e{u65}-\e{u44} and is a basis in $L^2$,  then for any $1<p<\infty$ we have	
	\begin{equation}
	 \ZA^p_{n,\text{mon}}(\Phi) \sim  \ZA^p_{n}(\Phi) \sim\sqrt{\log (n+1)}.
	\end{equation}
\end{corollary}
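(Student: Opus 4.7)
The upper bound $\ZA^p_{n}(\Phi)\lesssim\sqrt{\log(n+1)}$ is precisely \trm{T1}, and by the chain \e{1-2} it passes automatically to $\ZA^p_{n,\text{mon}}(\Phi)$. Since trivially $\ZA^p_{n,\text{mon}}\le\ZA^p_n$, the entire content of the corollary reduces to the matching lower bound $\ZA^p_{n,\text{mon}}(\Phi)\gtrsim\sqrt{\log(n+1)}$ for every $1<p<\infty$.

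I would first dispose of the case $p=2$. Here the hypothesis that $\Phi$ is a basis in $L^2$ is used only through completeness. Invoking the Nikishin-Ulyanov/Olevskii lower bound recalled in the paragraph following \e{1-2} (originating in \cite{NiUl} and \cite{Ole}, as extracted in \cite{Kar1}), which supplies for every complete orthonormal system a test function in $L^2$ together with a monotone nested family of index sets realizing $\sqrt{\log(n+1)}$ growth for the maximal partial-sum operator, one obtains $\ZA^2_{n,\text{mon}}(\Phi)\gtrsim\sqrt{\log(n+1)}$ immediately.

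To bootstrap to $p\ne 2$, I would exploit the unconditional basis property of wavelet type systems in $L^p$ promised in \rem{rem}. This yields uniform $L^p$-boundedness of the partial-sum projections $P_G f=\sum_{j\in G}\langle f,\phi_j\rangle\phi_j$ and the Littlewood-Paley equivalence $\|f\|_p\sim\|(\sum_j|\langle f,\phi_j\rangle\phi_j|^2)^{1/2}\|_p$. The plan is to choose in the $p=2$ extremal construction a wavelet polynomial $g^*$ whose active coefficients live at comparable dyadic scales; the bump estimates \e{u43}-\e{u44} then render the participating atoms essentially disjointly supported, and the norm equivalence $\|g^*\|_p\sim\|g^*\|_2$ follows, together with an analogous equivalence for each $P_{G_k^*}g^*$ and, by localization, for the pointwise maximum $\max_k|P_{G_k^*}g^*|$. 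This would deliver $\ZA^p_{n,\text{mon}}(\Phi)\gtrsim\sqrt{\log(n+1)}$.

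The chief technical obstacle lies in this last step: while the Littlewood-Paley estimate controls each individual $\|P_{G_k^*}g^*\|_p$ cleanly in terms of the square function, the pointwise maximum $\max_k|P_{G_k^*}g^*|$ is harder to transfer between $L^p$ norms. Arranging that the extremal $g^*$ be concentrated at a single or at comparable dyadic scales should make the support overlaps between distinct partial sums controlled enough for the maximum to inherit the desired norm comparison, but one must verify that the $p=2$ construction of \cite{NiUl,Ole,Kar1} admits such a single-scale restriction, or else construct an independent $L^p$ extremizer directly from the wavelet structure \e{u43}-\e{u44}.
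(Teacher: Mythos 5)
Your upper bound and your treatment of $p=2$ coincide with the paper's. The gap is in the passage from $p=2$ to general $p$, and it is a real one: the single-scale (or comparable-scale) reduction you hope to impose on the Nikishin--Ulyanov/Olevskii extremizer cannot exist. If $g=\sum_{2^{m-1}<j\le 2^m}a_j\phi_j$ lives on one dyadic block, then for any nested sets $G_k$ one has $\max_k\bigl|\sum_{j\in G_k}a_j\phi_j(x)\bigr|\le\sum_j|a_j\phi_j(x)|\lesssim\MM\bigl(\sum_j a_jh_j\bigr)(x)$ by \e{u59}, and combining \e{u45} with \e{u58} gives $\bigl\|\MM\bigl(\sum_j a_jh_j\bigr)\bigr\|_p\lesssim\|g\|_p$; so the maximal partial-sum operator restricted to a single block (or to boundedly many blocks) is bounded on every $L^p$ with constant independent of $n$, and no logarithmic growth can be produced there. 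The $\sqrt{\log(n+1)}$ growth in the $L^2$ example necessarily comes from many interacting scales, and the abstract constructions behind \cite{NiUl} and \cite{Ole} give no control over which scales occur, so the norm equivalences $\|g^*\|_p\sim\|g^*\|_2$ and, worse, the transfer of the pointwise maximum are out of reach along your route.

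The paper sidesteps all structural analysis of the extremizer by interpolating operator norms. Fix the $L^2$-extremal nested sets $G_1\subset\cdots\subset G_n$ and set $U(f)=\max_{1\le m\le n}\bigl|\sum_{j\in G_m}\langle f,\phi_j\rangle\phi_j\bigr|$, so that $\|U\|_{2\to2}\ge c\sqrt{\log(n+1)}$, while \trm{T1} gives $\|U\|_{q\to q}\le c_q\sqrt{\log(n+1)}$ for every $1<q<\infty$. Given $p\ne2$, choose $q$ so that $2$ lies strictly between $p$ and $q$; the quantitative (``full'') Marcinkiewicz interpolation theorem (\cite{Zyg}, Theorem 4.6) applied to the sublinear operator $U$ yields $\|U\|_{2\to2}\lesssim\|U\|_{p\to p}^{1-\theta}\|U\|_{q\to q}^{\theta}$ for the appropriate $\theta\in(0,1)$, and inserting the two known bounds forces $\|U\|_{p\to p}\gtrsim\sqrt{\log(n+1)}$. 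You should replace your third and fourth paragraphs with this argument; the rest of your proposal stands.
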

\begin{corollary}\label{C02}
	If $\ZF$ is either Haar or Franklin system,  then for any $1<p<\infty$ we have	
	\begin{equation}\label{u70}
		\ZA^p_{n,\text{sng}}(\ZF)\sim \ZA^p_{n,\text{mon}}(\ZF) \sim  \ZA^p_{n}(\ZF) \sim\sqrt{\log (n+1)}.
	\end{equation}
\end{corollary}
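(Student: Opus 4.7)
The plan is to deduce the upper bound $\ZA^p_n(\ZF)\lesssim \sqrt{\log(n+1)}$ directly from \trm{T1}, together with \e{1-2} for the other two quantities in \e{u70}. For this it suffices to verify that the Haar and Franklin systems satisfy the wavelet-type hypotheses \e{u65}--\e{u44}: for Haar this is immediate (with $\alpha=1$ and the characteristic function of the support in place of $\xi$), and for Franklin it follows from Ciesielski's classical exponential localization estimates, as already observed in \rem{rem1}.

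The substantive task is the matching lower bound $\ZA^p_{n,\text{sng}}(\ZF)\gtrsim\sqrt{\log(n+1)}$. For the Haar system at $p=2$ this is a direct instance of \trm{OT2}, since the Haar system is a particular generalized Haar system. To extend it to arbitrary $1<p<\infty$, I would inspect the Menshov-type construction underlying \trm{OT2}: it produces a Haar polynomial $f_n$ which, after a harmless normalization, is a $\{-1,0,1\}$-valued step function on a dyadic partition (so that $\|f_n\|_p\sim 1$ uniformly in $p$), together with a single-increment ordering of its support under which the maximal partial sum exceeds $c\sqrt{\log(n+1)}$ on a set of measure $\gtrsim 1$. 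These two-sided distributional bounds transfer the $L^2$-lower bound to every $L^p$.

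For the Franklin system, I would then transfer the Haar example using the equivalence of the Haar and Franklin systems as unconditional bases in $L^p(0,1)$, $1<p<\infty$, due to Ciesielski. Feeding the Haar coefficients and ordering into the Franklin expansion yields a Franklin polynomial whose $L^p$-norm and whose single-index maximal partial sum are comparable, up to a $p$-dependent constant, to those of the Haar example. What is needed for the maximal inequality is a vector-valued version of this basis comparison, which I expect to follow from a Calder\'on--Zygmund estimate applied to the difference operator $f\mapsto\sum_k\langle f,h_k\rangle(\phi_k-h_k)$, using the localization \e{u43}--\e{u44} and the cancellation \e{u65}.

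The principal obstacle is this last vector-valued Haar--Franklin comparison for the maximal operator. Should it prove awkward to implement directly, a cleaner alternative is to redo the Menshov-type construction intrinsically in terms of Franklin functions, exploiting the exponential localization \e{u43} in place of the compact Haar support to establish the requisite near-orthogonality of far-apart Franklin atoms on each dyadic scale.
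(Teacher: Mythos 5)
Your upper bounds are obtained exactly as in the paper, via \trm{T1} and \e{1-2}. The lower bounds are where your route diverges, and it contains two gaps. First, the passage from $p=2$ to general $p$: the paper never re-examines the Menshov-type construction. It takes the $L^2$-extremal data, forms the operator $U(f)=\max_{1\le m\le n}\bigl|\sum_{j\in G_m}\langle f,\phi_j\rangle\phi_j\bigr|$, notes that $\|U\|_{2\to 2}\gtrsim\sqrt{\log(n+1)}$ while \trm{T1} gives $\|U\|_{p\to p}\lesssim\sqrt{\log(n+1)}$ for \emph{every} $1<p<\infty$, and then invokes the quantitative (``full'') Marcinkiewicz interpolation theorem (\cite{Zyg}, Theorem 4.6): if $\|U\|_{p_0\to p_0}$ were $o(\sqrt{\log(n+1)})$ at some $p_0$, interpolating with an exponent on the other side of $2$ would force the same at $p=2$, a contradiction. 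This costs nothing and needs no structural information about the extremizer. Your alternative — that the extremal function is, after normalization, a $\{-1,0,1\}$-valued step function with two-sided distributional bounds uniform in $p$ — is an unverified property of a construction you have not written down, so as it stands that step is an assertion, not a proof.

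Second, and more seriously, the Franklin lower bound. Your transfer from Haar via boundedness of $f\mapsto\sum_k\langle f,h_k\rangle(\phi_k-h_k)$ does not close: to compare the two maximal partial sums you must control $\bigl\|\max_{1\le m\le n}\bigl|\sum_{j\in G_m}a_j(\phi_j-h_j)\bigr|\bigr\|_p$, which is again a maximal operator over $n$ index sets, and the only generic bound available for such an object (indeed, the main theorem of this paper) is $\sqrt{\log(n+1)}\,\|f\|_p$ — the same size as the main term — so the triangle inequality yields nothing. You correctly identify this as the principal obstacle, but the proposal leaves it open, and the fallback of redoing the Menshov construction intrinsically for Franklin functions is a substantial undertaking rather than a remark. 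The paper avoids the issue entirely: the $p=2$ lower bound for the Franklin system is deduced from Gevorkyan's theorem \cite{Gev} (necessity of $\sum_n 1/(nw(n))<\infty$ for UC-multipliers of the Franklin system) combined with the Ul'yanov--Poleshchuk results \cite{Uly3,Pol}, as indicated in the first remark of the introduction; the Haar case at $p=2$ is quoted from \cite{NiUl}; and both are then upgraded to all $1<p<\infty$ by the interpolation argument above, exactly as in the proof of \cor{C01}.
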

The following bound is an interesting phenomenon of the wavelet type systems and immediately follows from \e{a2}. 
\begin{corollary}\label{C4}
	Let $\Phi=\{\phi_k\}$ be a wavelet type system and $G_k\subset \ZN$, $k=1,2,\ldots, n$ be a arbitrary sets of indexes. Then for any function $f\in L^p$, $1<p<\infty$, we have 
	\begin{equation}\label{u30}
		\left\|\max_{1\le m\le n}\left|\sum_{j\in G_m}\langle f,\phi_j\rangle \phi_j\right|\right\|_p\lesssim \sqrt{\log (n+1)}\cdot \|f\|_p.
	\end{equation}
\end{corollary}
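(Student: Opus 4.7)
The plan is to read off \e{u30} directly from Theorem \ref{T1} by exhibiting each partial sum as a member of the admissible class $g_m \prec f$. Writing $a_j = \langle f, \phi_j \rangle$ and $\lambda_j^{(m)} = \mathbf{1}_{G_m}(j)$, the function
\begin{equation*}
g_m = \sum_{j \in G_m} \langle f, \phi_j\rangle \phi_j = \sum_j \lambda_j^{(m)} a_j \phi_j
\end{equation*}
has $|\lambda_j^{(m)}| \le 1$, which is precisely the defining condition $g_m \prec f$ introduced before \e{u33}.

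Assume first that every $G_m$ is finite, so each $g_m$ is a genuine $\Phi$-polynomial. Invoking the remark just before Theorem \ref{T1} that in the definition of $\ZA^p_n(\Phi)$ the test object may be any $f \in L^p$ (legitimate because of the unconditional $L^p$-convergence of wavelet expansions, see \rem{rem}), I obtain
\begin{equation*}
\Bigl\| \max_{1 \le m \le n} |g_m| \Bigr\|_p \le \ZA^p_n(\Phi) \cdot \|f\|_p.
\end{equation*}
Combined with \e{a2}, this yields \e{u30} whenever all $G_m$ are finite.

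For arbitrary $G_m$, I would exhaust each $G_m$ by an increasing sequence of finite sets $G_m^{(N)} \nearrow G_m$. Since $\Phi$ is an unconditional basis in $L^p$ for $1<p<\infty$, the truncated sums $g_m^{(N)}= \sum_{j \in G_m^{(N)}} a_j \phi_j$ converge in $L^p$, and along a subsequence also a.e., to the corresponding full sums. Applying the finite-case bound uniformly in $N$ and letting $N \to \infty$ via Fatou's lemma applied to the maximal function $\max_{1\le m\le n}|g_m^{(N)}|$ upgrades the inequality to the stated form.

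The main (and essentially only) obstacle is the bookkeeping needed to recognize that arbitrary partial-sum projections sit inside the admissible class controlled by $\ZA^p_n(\Phi)$, together with the passage from finite to possibly infinite $G_m$ via the unconditional basis property. All of the genuine work has already been absorbed into Theorem \ref{T1}.
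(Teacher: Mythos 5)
Your proposal is correct and matches the paper's intent exactly: the paper dismisses Corollary \ref{C4} as a direct consequence of Theorem \ref{T1}, precisely because each projection $\sum_{j\in G_m}\langle f,\phi_j\rangle\phi_j$ satisfies $g_m\prec f$ with coefficients $\lambda_j=\ZI_{G_m}(j)$, and the passage from $\Phi$-polynomials to general $f\in L^p$ (and to infinite $G_m$) is licensed by the unconditionality noted in Remark \ref{rem}. Your write-up simply fills in the bookkeeping the authors chose to omit.
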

In case of $p=2$ \trm{T1} implies the following results concerning the C or UC multipliers for non-overlapping $\Phi$-polynomials.
	\begin{corollary}\label{C1}
		If $\Phi=\{\phi_k\}$ is a wavelet type system, then the sequence  $\log n$ is a C-multiplier for any system of $L^2$-normalized \textit{non-overlapping} $\Phi$-polynomials
		\begin{equation}
			p_n(x)=\sum_{j\in G_n}c_j\phi_j(x),\quad n=1,2,\ldots,
		\end{equation}
		where $G_n\subset \ZN$ are finite and pairwise disjoint.
	\end{corollary}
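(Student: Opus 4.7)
The plan is to apply \cor{C4} at $p=2$ blockwise to the series $\sum a_n p_n$ and then run a Menshov--Rademacher-style dyadic argument, but with the ordinary dyadic blocks replaced by a coarser \emph{super-dyadic} partition tuned to the $\sqrt{\log N}$ (rather than $\log N$) right-hand side.

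I will set $f=\sum_n a_n p_n\in L^2$; the series converges in $L^2$ since pairwise disjointness of the $G_n$ together with $L^2$-normalization makes $\{p_n\}$ orthonormal and $\sum a_n^2<\infty$. Writing $p_n=\sum_{j\in G_n}c_j\phi_j$, disjointness gives $\langle f,\phi_j\rangle=a_nc_j$ whenever $j\in G_n$, so for any $E\subset\ZN$ the partial sums $\sum_{n\in E,\,n\le m}a_np_n$ coincide with $\sum_{j\in\bigcup_{n\in E,\,n\le m}G_n}\langle f_E,\phi_j\rangle\phi_j$, where $f_E=\sum_{n\in E}a_np_n$. I then take super-dyadic blocks $E_\ell=[2^{2^\ell},2^{2^{\ell+1}})$, so that $\log|E_\ell|\sim 2^\ell$. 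Setting $c_\ell^2=\sum_{n\in E_\ell}a_n^2$, $Q_\ell=\sum_{n\in E_\ell}a_np_n$, and $T_L^E=\sum_{\ell<L}Q_\ell$, the hypothesis $\sum a_n^2\log n<\infty$ becomes $\sum_\ell 2^\ell c_\ell^2<\infty$.

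Applying \cor{C4} inside each $E_\ell$ (to $f_E=Q_\ell$ with the index sets $\bigcup_{n\in E_\ell,\,n\le m}G_n$, $m\in E_\ell$) gives the intra-block maximal bound
\[
\|M_\ell^E\|_2:=\Big\|\max_{m\in E_\ell}\Big|\sum_{n\in E_\ell,\,n\le m}a_np_n\Big|\Big\|_2\lesssim \sqrt{\log|E_\ell|}\cdot c_\ell\lesssim 2^{\ell/2}c_\ell,
\]
so $\sum_\ell\|M_\ell^E\|_2^2\lesssim\sum_\ell 2^\ell c_\ell^2<\infty$ and hence $M_\ell^E\to 0$ a.e. Cauchy--Schwarz against the summable geometric series $\sum 2^{-\ell}<\infty$ then yields
\[
\sum_\ell c_\ell\le\Big(\sum_\ell 2^\ell c_\ell^2\Big)^{1/2}\Big(\sum_\ell 2^{-\ell}\Big)^{1/2}<\infty,
\]
so $\sum_\ell\|Q_\ell\|_2<\infty$ and by $L^2$-triangle inequality $\sum_\ell|Q_\ell|<\infty$ a.e.; consequently $T_L^E$ converges absolutely a.e.\ to a limit that must coincide with its $L^2$-limit $f$. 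For any $m\in E_\ell$ the decomposition $|S_m-f|\le M_\ell^E+|T_\ell^E-f|$ then gives $S_m\to f$ a.e.

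The one genuine obstacle is the inter-block step. The $\sqrt{\log N}$ maximal bound sits precisely at the boundary where the classical dyadic decomposition $[2^k,2^{k+1})$ fails: with $b_k^2=\sum_{n\in[2^k,2^{k+1})}a_n^2$ one gets intra-block summability $\sum k b_k^2<\infty$ but Cauchy--Schwarz produces $\sum b_k$ only with the divergent weight $(\sum 1/k)^{1/2}$. The super-dyadic scaling is exactly what converts this harmonic divergence into the convergent geometric $\sum 2^{-\ell}$; everything else is routine bookkeeping once \cor{C4} is available.
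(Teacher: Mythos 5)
Your proof is correct and takes essentially the same route as the paper, which derives \cor{C1} from \trm{T1} by "the same argument as in \cite{Kar1}" and omits the details: that argument is precisely the blocking scheme you describe, with the super-dyadic blocks $[2^{2^\ell},2^{2^{\ell+1}})$ being the standard device for converting a $\sqrt{\log n}$ maximal bound into a $\log n$ Weyl multiplier. Your diagnosis of why ordinary dyadic blocks fail at the inter-block Cauchy--Schwarz step, and the verification that disjointness of the $G_n$ lets \cor{C4} be applied blockwise, are both accurate.
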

	The following particular case of \cor{C1} is also new and interesting even for the classical Franklin system.
	\begin{corollary}\label{C3}
		The sequence $\log n$ is a C-multiplier for any rearrangement of a wavelet type system.
	\end{corollary}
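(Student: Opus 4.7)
The plan is to derive Corollary \ref{C3} as an immediate specialization of Corollary \ref{C1}. Let $\sigma\colon\ZN\to\ZN$ be the permutation defining the rearrangement, and define the $\Phi$-polynomials
\begin{equation*}
	p_n(x) := \phi_{\sigma(n)}(x),\qquad n=1,2,\ldots.
\end{equation*}
Each $p_n$ is of the form $\sum_{j\in G_n}c_j\phi_j$ with $G_n:=\{\sigma(n)\}$ and $c_{\sigma(n)}=1$, so in particular $\|p_n\|_2=1$, and the system $\{p_n\}$ is $L^2$-normalized. Because $\sigma$ is a bijection of $\ZN$, the index sets $G_n$ are pairwise disjoint finite subsets of $\ZN$; equivalently, $\{p_n\}$ is a non-overlapping system of $\Phi$-polynomials in the sense of \cor{C1}.

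It remains to translate the conclusion of \cor{C1} into the desired statement about rearrangements. By \cor{C1}, for any coefficients $(a_n)$ with $\sum_n a_n^2\log n<\infty$, the series $\sum_n a_n p_n(x)=\sum_n a_n\phi_{\sigma(n)}(x)$ converges almost everywhere. This is precisely the statement that $\log n$ is an a.e.\ convergence Weyl multiplier for the rearranged system $\{\phi_{\sigma(n)}\}$, which is what had to be proved. Since the whole argument is a direct specialization of the already-established \cor{C1} (itself deduced from \trm{T1}), no genuine obstacle appears; the only thing to verify is that singleton index sets fit the hypotheses of \cor{C1}, which they do by the bijectivity of $\sigma$.
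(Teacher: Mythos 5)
Your proof is correct and takes exactly the route the paper intends: the paper introduces \cor{C3} as a ``particular case of \cor{C1}'' and omits the details, and your specialization to the singleton, pairwise disjoint index sets $G_n=\{\sigma(n)\}$ with $c_{\sigma(n)}=1$ is precisely that particular case.
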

	\begin{corollary}\label{C2}
		Let $\Phi=\{\phi_k\}$ be a wavelet type system and $\{p_n\}$ be a sequence of $L^2$-normalized non-overlapping $\Phi$-polyno\-mials. If $w(n)/\log (n+1)$ is increasing and
		\begin{equation}\label{omega}
			\sum_{n=1}^\infty \frac{1}{nw(n)}<\infty,
		\end{equation}
		then $w(n)$ is an UC-multiplier for $\{p_n\}$.
	\end{corollary}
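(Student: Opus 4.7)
The plan is to follow the classical Kantorovich--Tandori machinery (see e.g.\ \cite{KaSa}) that converts a Menshov--Rademacher type maximal inequality into a UC--multiplier statement, using the sharper $\sqrt{\log(n+1)}$ factor of \cor{C4} in place of the generic $\log(n+1)$. This improvement in the maximal factor is exactly what drops the hypothesis on $w$ from $w/\log^{\,2}$ monotone (Tandori's classical threshold for arbitrary orthonormal systems) to $w/\log$ monotone as stated. Fix $\{a_n\}$ with $\sum a_n^{\,2}w(n)<\infty$ and an arbitrary permutation $\sigma$ of $\ZN$; put $f=\sum_m a_m p_m\in L^2$, $b_n=a_{\sigma(n)}$, $q_n=p_{\sigma(n)}$ and $S_N=\sum_{n\le N}b_n q_n$. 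The system $\{q_n\}$ is still an orthonormal family of non-overlapping $\Phi$-polynomials, so $S_N\to f$ in $L^2$, and the task is to prove a.e.\ convergence.

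First I would introduce a lacunary sequence $1=N_0<N_1<\cdots$ calibrated to the weight by $w(N_k)\sim 2^k$---admissible since $w(n)/\log(n+1)$ is non-decreasing and tends to infinity under $\sum 1/(nw(n))<\infty$---and apply \cor{C4} on each position-block $(N_k,N_{k+1}]$ to obtain
\begin{equation*}
\Big\|\max_{N_k<N\le N_{k+1}}|S_N-S_{N_k}|\Big\|_2^{\,2}\;\le\; C\,\log(N_{k+1}-N_k+1)\sum_{N_k<n\le N_{k+1}}b_n^{\,2}.
\end{equation*}
The calibration is chosen so that $\log(N_{k+1}-N_k+1)\lesssim w(N_k)\sim 2^k$, while the Cauchy--condensation form of $\sum 1/(nw(n))<\infty$ controls the block-sizes $|(N_k,N_{k+1}]|$. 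A summation by parts, organized by the dyadic level-set decomposition $L_k=\{m:2^k\le w(m)<2^{k+1}\}$ of the weight, then yields the uniform-in-$\sigma$ estimate
\begin{equation*}
\sum_{k\ge 0}\log(N_{k+1}-N_k+1)\sum_{N_k<n\le N_{k+1}}b_n^{\,2}\;\le\; C\sum_m a_m^{\,2}w(m)<\infty,
\end{equation*}
and Chebyshev plus Borel--Cantelli give $\max_{N_k<N\le N_{k+1}}|S_N-S_{N_k}|\to 0$ a.e.

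For the lacunary subsequence $\{S_{N_k}\}$ I would apply \cor{C4} a second time to the nested family of index sets $\sigma(\{1,\ldots,N_k\})$, combined with the summability of the tails $\|f-S_{N_k}\|_2^{\,2}=\sum_{n>N_k}b_n^{\,2}$ via the same level-set bookkeeping, to deduce $S_{N_k}\to f$ a.e. Combining with the block-wise oscillation estimate yields $S_N\to f$ a.e.\ for every $\sigma$, establishing that $w(n)$ is a UC-multiplier for $\{p_n\}$. The hard part is the summation step in the previous paragraph: for an arbitrary $\sigma$ the indices $\sigma(n)$ populating a position-block $(N_k,N_{k+1}]$ can lie anywhere in $\ZN$, so the local factor $\log(N_{k+1}-N_k+1)$ cannot be compared with $w(\sigma(n))$ pointwise. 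The required accounting is precisely the level-set decomposition above, whose cardinalities are constrained by $\sum 1/(nw(n))<\infty$ and whose matching with the position-blocks is dictated by $w(N_k)\sim 2^k$; the gain from $\log^{\,2}$ to $\log$ in the hypothesis on $w$ corresponds exactly to the gain from $\log$ to $\sqrt{\log}$ in the maximal factor of \cor{C4}.
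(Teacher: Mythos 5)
Your reduction to a block maximal estimate is the right general strategy, and the block inequality itself is a correct application of \trm{T1} (each $S_N-S_{N_k}$ satisfies $S_N-S_{N_k}\prec g_k:=\sum_{N_k<n\le N_{k+1}}b_nq_n$, by disjointness of the spectra $G_m$). The gap is the summation step: the displayed ``uniform-in-$\sigma$ estimate'' $\sum_k\log(N_{k+1}-N_k+1)\sum_{N_k<n\le N_{k+1}}b_n^2\le C\sum_m a_m^2w(m)$ is false, and no level-set bookkeeping can repair it, because the two sides genuinely cannot be compared. Take a single nonzero coefficient $a_{m_0}=1$: the right-hand side is $Cw(m_0)$, independent of $\sigma$, while the left-hand side equals $\log(N_{k+1}-N_k+1)$ for the block $k$ containing $\sigma^{-1}(m_0)$, which is unbounded over $\sigma$ for any admissible weight with unbounded block lengths (e.g. $w(n)=\log(n+1)\log\log(n+3)\bigl(\log\log\log(n+16)\bigr)^{2}$, which satisfies both hypotheses of the corollary). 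The same obstruction defeats the fixed-$\sigma$ version and also your second step, since $\sum_k\|f-S_{N_k}\|_2^2=\sum_m a_m^2\,k(m)$ with $k(m)\sim\log_2 w(\sigma^{-1}(m))$, again uncontrollable by $\sum_m a_m^2w(m)$. The root cause is structural: any partition of the \emph{positions} $n$ into blocks calibrated to $w$ assigns to the coefficient $a_m$ a logarithmic factor determined by $\sigma^{-1}(m)$ rather than by $m$, so it can only prove the C-multiplier statement (\cor{C1}) in the given order, never the unconditional one.

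The argument the paper relies on (via Corollary 1.6 of \cite{Kar1}) partitions the \emph{original indices} by the level sets of the weight instead: with $\nu_j=\#\{m:w(m)\le 2^j\}$ and $L_j=\{m:2^{j-1}<w(m)\le 2^j\}$, write $f=\sum_j f_j$, $f_j=\sum_{m\in L_j}a_mp_m$, and observe that for any rearrangement the partial sums restricted to $L_j$, namely $S_N^{(j)}=\sum_{m\in\sigma(\{1,\dots,N\})\cap L_j}a_mp_m$, form a \emph{monotone} family of at most $\#L_j+1$ polynomials with $S_N^{(j)}\prec f_j$. Then \trm{T1} gives $\|\sup_N|S_N^{(j)}|\|_2\lesssim\sqrt{\log(\nu_j+2)}\,\|f_j\|_2\lesssim\sqrt{2^{-j}\log(\nu_j+2)}\bigl(\sum_{m\in L_j}a_m^2w(m)\bigr)^{1/2}$, and Cauchy--Schwarz in $j$ bounds $\|\sum_j\sup_N|S_N^{(j)}|\|_2$ by $\bigl(\sum_j 2^{-j}\log(\nu_j+2)\bigr)^{1/2}\bigl(\sum_m a_m^2w(m)\bigr)^{1/2}$; condition \e{omega} is (by Abel summation) exactly $\sum_j 2^{-j}\log(\nu_j+1)<\infty$, and this bound is uniform in $\sigma$. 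Since each $L_j$ is finite, $S_N^{(j)}$ eventually equals $f_j$ for every fixed $j$, and a.e.\ finiteness of $\sum_j\sup_N|S_N^{(j)}|$ then yields $S_N\to f$ a.e.\ for every $\sigma$. Note where the hypotheses enter: \e{omega} drives the convergence of the level-set sums, and this is precisely the place your position-block scheme has nothing to sum against.
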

	
	The only prior result in the context of Wail multipliers, concerning to non Haar wavelet type systems, is due to Gevorkyan \cite{Gev}, who proved 
	\begin{OldTheorem}[Gevorkyan, \cite{Gev}]
	The sequence $w(n)\nearrow$ is an UC-multiplier for the Franklin system if and only if it satisfies \e{omega}. 
	\end{OldTheorem}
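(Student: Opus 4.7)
The plan is to split Gevorkyan's biconditional into sufficiency and necessity and handle them separately.

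For the \emph{sufficiency} ($\sum 1/(nw(n))<\infty$ implies that $w$ is a UC-multiplier for the Franklin system), I would appeal to Corollary \ref{C2}. The Franklin system is a wavelet type system satisfying \e{u65}--\e{u44}, and each Franklin function $\phi_n$ can be viewed as an $L^2$-normalized non-overlapping $\Phi$-polynomial with $G_n=\{n\}$, so \cor{C2} applies immediately whenever $w(n)/\log(n+1)$ is additionally non-decreasing. In the absence of this monotonicity (which is not assumed in Gevorkyan's statement) one has to either establish a variant of \cor{C2} directly, or regularize $w$ by a suitable comparable minorant; the subtle point here is that the naive regularization $\tilde w(n)/\log(n+1):=\inf_{m\ge n}w(m)/\log(m+1)$ can collapse the summability of $\sum 1/(n\tilde w(n))$, so a more refined monotonization adapted to the structure of $w$ is needed.

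For the \emph{necessity} ($\sum 1/(nw(n))=\infty$ implies $w$ is not a UC-multiplier), the plan is a Men\-shov--Ul'yanov-type construction. Inside each dyadic block $I_k=(2^{k-1},2^k]$ one builds a Menshov-style packet of Franklin functions whose maximal rearranged partial sum attains a lower bound of order $\sqrt{\log|I_k|}$ on a set of positive measure, an analogue of \trm{M} adapted to the Franklin system. Choosing amplitudes $a_n$ so that the weighted $\ell^2$-norm $\sum a_n^2 w(n)$ is finite while the cumulative blockwise lower bounds diverge (which is precisely what $\sum 1/(nw(n))=\infty$ guarantees under correct tuning), and splicing together the worst-case rearrangements from each block, a Borel--Cantelli-type argument applied to the events ``the block-$k$ contribution exceeds a fixed threshold'' produces a.e.\ divergence of the rearranged partial sums.

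The main obstacle is the necessity direction. Unlike the Haar case, Franklin functions have full support on $(0,1)$ with only exponentially decaying tails, so the Menshov packets inside a block $I_k$ are not exactly orthonormal after localization but only approximately so. Controlling the cross-interactions between packets at different dyadic scales using the decay \e{u43} and the Hölder estimate \e{u44}, and verifying that these tails do not destroy the lower bound on the maximal rearranged sum, is the technical heart of the argument, and is exactly what Gevorkyan's original proof in \cite{Gev} carries out in detail.
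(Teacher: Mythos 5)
This theorem is not proved in the paper at all: it is quoted as a prior result of Gevorkyan \cite{Gev} (the paper even relies on it, in Remark 1, to deduce the sharpness of its own corollaries), so there is no in-paper argument to compare yours against. Judged on its own terms, your proposal is an outline in which both essential steps remain open. For sufficiency, Corollary \ref{C2} gives the implication only under the extra hypothesis that $w(n)/\log(n+1)$ is increasing, which Gevorkyan's statement does not assume; you correctly observe that the naive monotone minorant $\tilde w(n)=\log(n+1)\inf_{m\ge n}w(m)/\log(m+1)$ can destroy the summability of $\sum 1/(n\tilde w(n))$, but you never supply the ``more refined monotonization,'' so this direction is not closed as written. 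In fact the cleanest route bypasses the wavelet machinery entirely: by the general theorem for arbitrary orthonormal systems due to Pole\v{s}\v{c}uk \cite{Pol} (cited in the paper's Remark 1), condition \e{omega} already makes an increasing $w$ a UC-multiplier for \emph{every} orthonormal system, with no monotonicity of $w(n)/\log(n+1)$ required.

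The necessity direction is where the actual content of Gevorkyan's theorem lies, and there your proposal does not contain a proof. The existence of Menshov-type Franklin packets realizing $\sqrt{\log|I_k|}$ lower bounds on sets of positive measure, the control of inter-block interactions caused by the full support and merely polynomial tail decay of the Franklin functions, and the quantitative matching between $\sum 1/(nw(n))=\infty$ and the choice of blockwise amplitudes are all asserted rather than carried out, and you explicitly defer them to Gevorkyan's original argument. Deferring the self-described ``technical heart'' to the very reference whose theorem is being proved means the proposal establishes neither implication; it is a reasonable reading guide to \cite{Gev}, but not a proof.
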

\subsection{Remarks}
 \begin{remark}
 	In the case of the Franklin system the optimality of $\log n$ in \cor{C3}  as well as  condition \e{omega} in \cor{C2} both follows just from the direct combination of this result of Gevorkyan with  a result of Ul\cprime yanov-Poleshchuk \cite{Uly3, Pol}.
 \end{remark}

\begin{remark}
	\cor{C4} is a direct consequence of \trm{T1}. Corollaries \ref{C1}-\ref{C2} follow
from Theorem 1.1 in exactly the same way as Theorem C implies Corollaries 1.4 - 1.6
in \cite{Kar1}, therefore we have decided to skip the details.	The upper bounds in Corollaries \ref{C01} and \ref{C02} immediately follows from \trm{T1}. The lower bounds easily follow from certain results well-known in the theory of general orthogonal series and their proofs will be shortly provided immediately after the proof of \trm{T1}.
\end{remark}

\begin{remark}
	Similarly, as in the proofs of \trm{OT1} and \trm{OT2} the main argument in the proof of  \trm{T1} is a good-$\lambda$ inequality for the Haar system due to Chang-Wilson-Wolff  \cite{CWW}. The needed link between Haar and wavelet type expansions is provided by \lem{L4}. An application of the inequality of \cite{CWW} in the study of maximal functions of  Mikhlin-H\"{o}rmander multipliers was considered in \cite{GHS} by Grafakos-Honz\'{i}k-Seeger.  Namely, given multipliers $m_k$, $k=1,2,\ldots, N$, on $\ZR^n$ with uniform estimates, it was proved the optimal $\sqrt{ \log N}$ bound in $L^p$, $1<p<\infty$, for the maximal function $\max_{k} |\ZF^{-1}(m_k\hat f)|$.
\end{remark}
\begin{remark}\label{rem1}
	Examples of wavelet type systems are  the classical wavelets, orthonormal spline systems on $[0,1]$, where an interesting case is the Franklin system. All the results of the paper can be applied also to the periodic wavelet systems, since those can be split into a union of two wavelet type systems. With a slight change in the proofs one can also establish the results for the biorthogonal systems $(\phi_n,\psi_n)$, where both $\phi_n$ and $\psi_n$ satisfy the conditions \e{u43} and \e{u44}.

	Finally, observe that the Haar system doesn't satisfy \e{u44}. Nevertheless, the proof of estimate \e{u70} for the Haar system is straightforward and we will not need  \lem{L4}. Indeed, let $f=\sum_{j=1}^\infty b_jh_j\in L^p$ and $p_k\prec f$, $k=1,2,\ldots, n$. Instead of \e{u34} one needs to consider the function
	\begin{equation*}
		\ZP(x)=\sup_{1\le k\le n}S(p_k)(x)\le S\left(f\right)(x),
	\end{equation*}
	where $S$ is the Haar square function. So we have $\|\ZP\|_p\le c_p\|f\|_p$ (see \cite{KaSa}, chap. 3). Then repeating the argument of the proof of \trm{T1} given in \sect{S5}, we will get \e{u70} for the Haar system too.
\end{remark}

\begin{remark}\label{rem}
		Observe that the wavelet type systems share the following property of unconditional bases of $L^p$:  for any sequence $\lambda=\{\lambda_k\}$, $|\lambda_k|\le 1$, and a function $f\in L^p$, $1<p<\infty$, the series $T_\lambda(f)=\sum_k\lambda_k\langle f,\phi_k\rangle \phi_k$ converges in $L^p$ and $\left\|T_\lambda(f)\right\|_p\lesssim \|f\|_p$.
This property can be established by a well-known argument (see for example \cite{HeWe}), proving that $K_\lambda(x,t)=\sum_k \lambda_k\phi_k(x)\phi_k(t)$ is a Calder\'on-Zygmund kernel, namely
\begin{align}
	&\left|K_\lambda(x,t)\right|\lesssim \frac{1}{|x-t|},\label{y1}\\
	&|K_\lambda(x,t)-K_\lambda(x,t')|\lesssim \frac{|t-t'|^\beta }{|x-t|^{1+\beta}},\quad |x-t|>2|t-t'|,\label{y2}
\end{align}
where $0<\beta<\min\{\alpha,\delta\}$. Concerning to unconditional wavelet bases in $L^p$, $1<p<\infty$, we can also refer the readers to the papers \cite{Mey, Woj, Grip, Wol}.
\end{remark}
\begin{remark}
	Suppose $T_k$, $k=1,2,\ldots,n$, is a collection of Calder\'on-Zygmund operators with kernels, satisfying the standard conditions \e{y1} and \e{y2} uniformly.  It was proved in \cite{KaLa} that
	\begin{equation}\label{y4}
		\left\|\max_{1\le k\le n}|T_k(f)|\right\|_p\lesssim \log(n+1)\cdot \|f\|_p,\quad 1<p<\infty
	\end{equation}
(see \cite{KaLa}, Corollary 2.11). \cor{C4} above shows that for specific Calder\'on-Zygmund operators, namely
\begin{equation*}
	T_k(f)=\sum_{j\in G_k}\langle f,\phi_j\rangle \phi_j,\quad k=1,2,\ldots,n,
\end{equation*}
inequality \e{y4} holds with the bound $\sqrt{\log(n+1)}$.
\end{remark}
	
		\section{Notations and auxiliary estimates}
	Recall the definition of $L^2$-normalized Haar system $h_k(x)$, $k=1,2,\ldots$, on $[0,1]$. That is $h_1(x)=1$ and for $k\ge 2$ of the form \e{u52} we have
	\begin{align}
		h_k(x)=\left\{\begin{array}{rrll}
			&2^{n/2} &\hbox{ if }& x\in \left[{\frac{j-1}{2^n}},\frac{2j-1}{2^{n+1}}\right),\\
			&-2^{n/2} &\hbox{ if }& x\in \left[\frac{2j-1}{2^{n+1}},\frac{j}{2^n}\right),\\
			&0&\hbox{ otherwise. }&
		\end{array}
		\right.
		\end{align}
Given $1<q<\infty$ define the maximal function
\begin{equation*}
	\MM_q (f)(x)=\sup_{I:\,I\supset x}\left(\frac{1}{|I|}\int_I|f|^q\right)^{1/q},\quad f\in L^1(\ZT),
\end{equation*}
where $\sup$ is taken over all the open intervals $I\subset [0,1)$ containing the point $x$. The case of $q=1$, which is the classical Hardy-Littlewood maximal function, will be simply denoted  by $\MM$.
Recall the Fefferman-Stein vector valued maximal inequality (see \cite{Ste}, chap. 2)
\begin{equation}\label{u41}
	\left\|\left(\sum_{m=1}^\infty  |\MM_q\left(g_k\right)|^2\right)^{1/2}\right\|_p\lesssim \left\|\left(\sum_{m=1}^\infty  |g_k|^2\right)^{1/2}\right\|_p,
\end{equation}
where $1<p<\infty$, $1\le  q<\min\{2,p\}$. In this section we will establish standard estimates to be used in the proof of the main lemma.
	
	\begin{lemma}\label{L8}
		If $\{\phi_n\}$ is a wavelet type system, then for any coefficients $a_k$ there hold the bounds
		\begin{align}
			&\sum_{k=2^{m-1}+1}^{2^m}|a_k \phi_k(x)|\lesssim \MM\left(\sum_{k=2^{m-1}+1}^{2^m}a_k  h_k\right)(x),\label{u59}\\
			&\MM\left(\sum_{k=2^{m-1}+1}^{2^m}a_k  h_k\right)(x)\lesssim \MM\left(\sum_{k=2^{m-1}+1}^{2^m}|a_k \phi_k|\right)(x).\label{u45}
		\end{align}
	\end{lemma}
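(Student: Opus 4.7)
The plan is to compare, at scale $m$, the Haar block and the wavelet block intervalwise. Writing $k=2^{m-1}+j$ with $1\le j\le 2^{m-1}$, one has $h_k=\pm 2^{(m-1)/2}$ on the dyadic interval $I_k=[(j-1)/2^{m-1},j/2^{m-1}]$ and $t_k\in I_k$, so for any interval $I$ that is a union of such $I_l$'s
\begin{equation*}
\frac{1}{|I|}\int_I\Bigl|\sum_l a_lh_l\Bigr|=\frac{2^{-(m-1)/2}}{|I|}\sum_{l:\,I_l\subset I}|a_l|.
\end{equation*}
This identity will be the workhorse in both directions.

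For \e{u59} I would slice the sum by the dyadic distance from $x$ to $t_k$: set $A_s=\{k:\,2^{s-1}<2^{m-1}|x-t_k|\le 2^s\}$ for $s\ge 1$, and let $A_0$ collect the remaining (nearby) indices. On $A_s$ the bound \e{u43} gives $|\phi_k(x)|\lesssim 2^{(m-1)/2}\cdot 2^{-(1+\delta)s}$, while $|A_s|\lesssim 2^s$ because the $t_k$ are $2^{-(m-1)}$-spaced. Moreover all $I_k$ with $k\in A_s$ lie in an interval $\tilde I_s\ni x$ of length $\sim 2^s\cdot 2^{-(m-1)}$, so the displayed identity above gives
\begin{equation*}
\sum_{k\in A_s}|a_k|\lesssim 2^s\cdot 2^{-(m-1)/2}\,\MM\Bigl(\sum_l a_lh_l\Bigr)(x).
\end{equation*}
Combining the two pointwise bounds on $A_s$ and summing the geometric series $\sum_{s\ge 0}2^{-\delta s}$ yields \e{u59}.

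For \e{u45} the first step is an $L^1$ lower bound: there is an absolute $R>0$ so that with $J_l=[t_l-R\cdot 2^{-(m-1)},\,t_l+R\cdot 2^{-(m-1)}]$ one has $\int_{J_l}|\phi_l|\gtrsim 2^{-(m-1)/2}$. I would obtain this by choosing $R$ large enough, via \e{u43}, that $\int_{|y|>R}\xi(y)^2\,dy$ is small, so that $\int_{J_l}|\phi_l|^2\ge 1/2$; then the trivial bound $\int_{J_l}|\phi_l|^2\le\|\phi_l\|_\infty\int_{J_l}|\phi_l|$ together with $\|\phi_l\|_\infty\lesssim 2^{(m-1)/2}$ (again from \e{u43}) delivers the claim. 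Given this, any interval $I\ni x$ can be enlarged by a bounded factor to an interval $I^*\ni x$ that is a union of $I_l$'s and contains every $J_l$ with $I_l\subset I$; then
\begin{equation*}
\frac{1}{|I^*|}\int_{I^*}\sum_l|a_l\phi_l|\gtrsim\frac{1}{|I|}\sum_{l:\,I_l\subset I}|a_l|\cdot 2^{-(m-1)/2}=\frac{1}{|I|}\int_I\Bigl|\sum_l a_lh_l\Bigr|,
\end{equation*}
and taking $\sup$ over $I\ni x$ on both sides yields \e{u45}.

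The only point requiring more than routine bookkeeping is the $L^1$ lower bound for $|\phi_l|$ on a bounded neighborhood of $t_l$; once that is in hand, both inequalities reduce to exploiting the polynomial tails in \e{u43} through a geometric-series summation.
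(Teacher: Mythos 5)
Your proof is correct and follows essentially the same route as the paper's: for \e{u59} you unpack, via dyadic annuli around $x$, the standard fact that convolution with the integrable decreasing majorant $2^m\xi(2^m\cdot)$ is dominated by the Hardy--Littlewood maximal function applied to the Haar block, and for \e{u45} you derive a quantitative lower bound for $\phi_k$ near $t_k$ from $\|\phi_k\|_2=1$ together with \e{u43} and then compare averages over comparably sized intervals. The only cosmetic difference is that the paper phrases the lower bound as a measure estimate for the level set $\{|\phi_k|>\varepsilon 2^{m/2}\}$ rather than your $L^1$ bound $\int_{J_k}|\phi_k|\gtrsim 2^{-(m-1)/2}$; the two are interchangeable here.
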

	\begin{proof}
		Using \e{u43} and a standard argument, we have 
		\begin{align}
			\sum_{k=2^{m-1}+1}^{2^{m}}|a_k\phi_k(x)|&\lesssim 2^{m/2}\sum_{k=2^{m-1}+1}^{2^{m}}|a_k|\xi\big(2^m(x-t_k)\big)\\
			&	\lesssim 2^{m}\int_0^1\sum_{k=2^{m-1}+1}^{2^{m}}|a_k||h_k(t)|\xi\big(2^m|x-t|\big)dt\\
			&\lesssim \MM\left(\sum_{k=2^{m-1}+1}^{2^{m}}a_kh_k\right)(x)
		\end{align}
		that gives \e{u59}. To prove \e{u45} we use \e{u44} and the normality condition $\|\phi_k\|_2=1$. So one can fix a number $a>0$, depending on the constant in \e{u43}, such that 
		\begin{equation}\label{u53}
			\int_{t_k-a/2^m}^{t_k+a/2^m}|\phi_k(t)|^2dx\ge \frac{1}{2},\quad 2^{m-1}<k<2^m.
		\end{equation}
		Given $\varepsilon >0$ consider the set
		\begin{equation*}
			E_k=\{x\in (t_k-a/2^m,t_k+a/2^m):\, |\phi_k(x)|>\varepsilon 2^{m/2}\}.
		\end{equation*}
		From \e{u53} and \e{u43} we obtain
		\begin{equation*}
			\frac{1}{2}\le c^22^m|E_k|+\varepsilon^22^m(2a/2^m-|E_k|),
		\end{equation*}
		that implies $|E_k|\gtrsim 1/n$ for a small enough $\varepsilon$ independent on $k$. Thus one can easily get
		
\begin{align*}
	\MM\left(\sum_{k=2^{m-1}+1}^{2^m}a_k  h_k\right)(x)&\lesssim\MM\left(\sum_{k=2^{m-1}+1}^{2^m}2^{m/2}|a_k| \ZI_{E_k}\right)(x)\\
	&\lesssim \MM\left(\sum_{k=2^{m-1}+1}^{2^m}|a_k|  |\phi_k|\right)(x)
\end{align*}
and so \e{u45}.
	\end{proof}
The following lemma is a version of Littlewood-Paley inequality.
\begin{lemma}\label{L11}
	Let $\{\phi_n\}$ be a wavelet type system and $1<p<\infty$. Then for any function $f\in L^p$ it holds the inequality
	\begin{equation}\label{u58}
		\left\|\left(\sum_{m=1}^\infty  \left(\sum_{k=2^{m-1}+1}^{2^{m}}|\langle f,\phi_k\rangle \phi_k|\right)^2\right)^{1/2}\right\|_p\lesssim \|f\|_p.
	\end{equation}
\end{lemma}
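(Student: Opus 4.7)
The plan is to reduce \e{u58} to the $L^p$ wavelet square function estimate
$\|(\sum_k |\langle f,\phi_k\rangle\phi_k|^2)^{1/2}\|_p\lesssim \|f\|_p$,
which is a consequence of the unconditional basis property noted in \rem{rem} combined with Khintchine's inequality. The bridge between the two bounds is provided by two applications of the Fefferman--Stein vector-valued maximal inequality \e{u41} (each time with $q=1$), which let one pass freely between $\phi_k$ and $h_k$ on both sides of the inequality by means of \lem{L8}.

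Setting $c_k=\langle f,\phi_k\rangle$ and $B_m=\{2^{m-1}+1,\ldots,2^m\}$, I would first apply \e{u59} on each dyadic block to obtain the pointwise bound $\sum_{k\in B_m}|c_k\phi_k|\lesssim \MM(\sum_{k\in B_m}c_kh_k)$, and then invoke \e{u41} (with $q=1$, $s=2$, valid for $1<p<\infty$) to obtain
\begin{equation*}
\bigg\|\bigg(\sum_m \Big(\sum_{k\in B_m}|c_k\phi_k|\Big)^{2}\bigg)^{1/2}\bigg\|_p\lesssim\bigg\|\bigg(\sum_m \Big|\sum_{k\in B_m}c_kh_k\Big|^{2}\bigg)^{1/2}\bigg\|_p.
\end{equation*}
Since the Haar functions within a fixed block $B_m$ have pairwise disjoint supports, the inner square expands with no cross terms, and summing over $m$ collapses the double sum to $\sum_k c_k^2 h_k^2$.

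The next step is a pointwise bound $|h_k|\lesssim \MM(\phi_k)$ valid for every single $k$. This is a specialization of \e{u45} (taking all but one coefficient equal to zero, combined with $|h_{k_0}|\le \MM(h_{k_0})$ a.e.), or may be extracted directly from the lower estimate $|\phi_k|\gtrsim 2^{n/2}$ on a set $E_k\subset I_k$ of measure $\gtrsim |I_k|$ produced in the proof of \lem{L8}. Squaring and substituting gives $\sum_k c_k^2 h_k^2 \lesssim \sum_k (\MM(c_k\phi_k))^2$, and a second application of \e{u41} yields
\begin{equation*}
\bigg\|\bigg(\sum_k c_k^2 h_k^2\bigg)^{1/2}\bigg\|_p\lesssim \bigg\|\bigg(\sum_k |c_k\phi_k|^2\bigg)^{1/2}\bigg\|_p.
\end{equation*}
The right-hand side is bounded by $\|f\|_p$ via the wavelet square function estimate recalled at the outset, which completes the proof.

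The only genuine technical point is the pointwise estimate $|h_k|\lesssim \MM(\phi_k)$; the rest is a mechanical chain built from Fefferman--Stein, the within-block orthogonality of the Haar system, and the already-established unconditionality of the wavelet type basis in $L^p$.
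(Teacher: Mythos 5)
Your proposal is correct and follows essentially the same route as the paper: the chain \e{u59} plus Fefferman--Stein, the within-block disjointness of the Haar supports collapsing the double sum to $\sum_k c_k^2h_k^2$, the pointwise bound $|h_k|\lesssim \MM(\phi_k)$ extracted from \e{u45} followed by a second application of \e{u41}, and finally the classical square-function estimate obtained from unconditionality (Remark \ref{rem}) and Khintchine's inequality. No gaps.
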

	\begin{proof} Denote $a_k=\langle f,\phi_k\rangle $ and let $r_k(x)$ be the Rademacher system. According to Remark \ref{rem} and Khintchin's inequality we have
\begin{equation}\label{u60}
\left\|\left(\sum_{k=1}^\infty  a_k^2 \phi_k^2\right)^{1/2}\right\|_p^p\sim \int_0^1\int_0^1\left|\sum_kr_k(t)a_k\phi_k(x)\right|^pdxdt\lesssim \|f\|_p^p.
\end{equation}
which is the classical version of the Littlewood-Paley inequality. 
Then, by \e{u45} in particular we have $|h_k(t)|\lesssim \MM(\phi_k)(t)$. Thus, from \e{u59}, \e{u41} with $q=1$ and \e{u60}, we obtain
		\begin{align}
			\left\|\left[\sum_{m=1}^\infty  \left(\sum_{k=2^{m-1}+1}^{2^{m}}|a_k\phi_k|\right)^2\right]^{1/2}\right\|_p&\lesssim \left\|\left[\sum_{m=1}^\infty  \MM\left(\sum_{k=2^{m-1}+1}^{2^{m}}a_kh_k\right)^2\right]^{1/2}\right\|_p\\
			&\lesssim \left\|\left(\sum_{k=2}^\infty a_k^2h_k^2\right)^{1/2}\right\|_p\lesssim \left\|\left[\sum_{k=2}^\infty (\MM(a_k\phi_k))^2\right]^{1/2}\right\|_p\\
			&\lesssim \left\|\left(\sum_{k=2}^\infty a_k^2\phi_k^2\right)^{1/2}\right\|_p\lesssim \|f\|_p.
		\end{align}
	\end{proof}

	\section{The main lemma}	\label{S4}
	Let $\Phi=\{\phi_k(x):\,k=1,2,\ldots\}$ be our wavelet type system, i.e. it  satisfies \e{u65}, \e{u43} and \e{u44}. For $f\in L^1(0,1)$, we consider the Fourier partial sums  
	\begin{align}
		&\Phi_n(f)(x)=\sum_{j=1}^{2^{n}}\langle f,\phi_j\rangle \phi_j(x),\quad n=0,1,2,\ldots,\\
		&\Delta \Phi_n(f)(x)=\sum_{j=2^{n-1}+1}^{2^{n}}\langle f,\phi_j\rangle \phi_j(x),\quad n\ge 1.
	\end{align}
	The Haar system analogues of such sums will be denoted by
	\begin{equation}\label{u32}
		H_{n}(f)(x)=\sum_{j=1}^{2^{n}}\langle f,h_{j}\rangle h_{j}(x),\quad \Delta H_n(f)(x)=\sum_{j=2^{n-1}+1}^{2^{n}}\langle f,h_j\rangle h_j(x).
	\end{equation}
	We denote by $I_{n}(x)$ the dyadic interval of the form $[(j-1)/2^n,j/2^n)$,
	containing a point $x\in[0,1)$. For the Haar partial sum \e{u32} we have
	\begin{equation}
		H_{n}(f)(x)=\frac{1}{|I_{n}(x)|}\int_{I_{n}(x)}f,\quad x\in [0,1)
	\end{equation}
(see \cite{KaSa}, chap. 3).  The following standard bound
\begin{align}
	\sum_{2^{n-1}<k\le 2^{n}}\left|\phi_k(x)\phi_k(t)\right|\lesssim 2^{n}\xi\big(2^{n}(x-t)\big)\label{y3}
\end{align}
one can find for example in \cite{HeWe} (see chap. 5, Lemma 3.12). 
	\begin{lemma}\label{L9}
		For any interval $I=[a,b)\subset [0,1)$ we have
		\begin{align}
			&|\Delta \Phi_m (\ZI_I)(x)|\lesssim  \frac{1}{(1+2^m|x-a|)^\delta}+\frac{1}{(1+2^m|x-b|)^\delta},\quad x\in [0,1),\label{u66}\\
			&|\Delta \Phi_m (\ZI_I)(x)|\lesssim  2^m|I|\xi(2^m(x-a)),\quad x\in [0,1]\setminus 2I.\label{u67}
		\end{align}
	\end{lemma}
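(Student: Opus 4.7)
The plan is to represent $\Delta\Phi_m(\ZI_I)(x)$ as an integral against the reproducing kernel
\[
K_m(x,t)=\sum_{2^{m-1}<k\le 2^m}\phi_k(x)\phi_k(t),
\]
so that $\Delta\Phi_m(\ZI_I)(x)=\int_a^b K_m(x,t)\,dt$, and then to exploit the pointwise bound $|K_m(x,t)|\lesssim 2^m\xi(2^m(x-t))$ already recorded in \e{y3}. The two inequalities then differ only in how this kernel estimate is integrated against $\ZI_I$: \e{u67} uses size alone, while \e{u66} additionally needs the cancellation from \e{u65}.

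For \e{u67}, I would first observe that when $x\in[0,1]\setminus 2I$ every point $t\in I$ is at distance comparable to $|x-a|$ from $x$. Indeed, if $x$ lies to the right of $I$ then $|x-b|\ge |I|/2$, hence $|x-a|=|x-b|+|I|\le 3|x-b|\le 3|x-t|$, and a symmetric argument applies when $x$ lies to the left. Consequently $\xi(2^m(x-t))\lesssim\xi(2^m(x-a))$ uniformly for $t\in I$, and integrating against the kernel bound gives
\[
|\Delta\Phi_m(\ZI_I)(x)|\le\int_a^b|K_m(x,t)|\,dt\lesssim 2^m|I|\,\xi(2^m(x-a)),
\]
which is \e{u67}.

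For \e{u66}, I would split $\ZI_I=\ZI_{[0,b)}-\ZI_{[0,a)}$, reducing the problem to the one-sided estimate
\[
|\Delta\Phi_m(\ZI_{[0,c)})(x)|\lesssim (1+2^m|x-c|)^{-\delta},\qquad c\in[0,1].
\]
The zero-mean property \e{u65} yields $\int_0^1 K_m(x,t)\,dt=0$, whence $\int_0^c K_m(x,t)\,dt=-\int_c^1 K_m(x,t)\,dt$, so I am free to estimate on whichever of the two domains lies on the side of $c$ opposite to $x$. On that domain $|x-t|\ge|x-c|$ throughout, so the kernel bound produces
\[
|\Delta\Phi_m(\ZI_{[0,c)})(x)|\lesssim\int_{|x-c|}^\infty\frac{2^m\,du}{(1+2^m u)^{1+\delta}}\lesssim (1+2^m|x-c|)^{-\delta},
\]
and applying this with $c=a$ and $c=b$ establishes \e{u66}. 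No substantial obstacle is anticipated; the only conceptual point is to recognise that the globally valid but slower decay \e{u66} cannot be obtained from the kernel size alone and must instead exploit the mean-zero condition of the system, whereas \e{u67} is a straightforward consequence of the kernel estimate in the regime where its decay dominates uniformly over $I$.
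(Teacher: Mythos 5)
Your proof is correct and follows essentially the same route as the paper: both rest on the kernel bound \e{y3} together with the cancellation \e{u65}, and the integral computations are identical. The only difference is cosmetic — you reduce \e{u66} to the one-sided estimate for $\ZI_{[0,c)}$ and invoke the mean-zero trick for each endpoint separately, whereas the paper estimates directly when $x\notin I$ and passes to the complement $[0,1)\setminus I$ only when $x\in I$; your organization is arguably a touch cleaner but changes nothing of substance.
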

	\begin{proof}
		To prove \e{u66} first consider the case $x\in [0,1)\setminus I$ and suppose that $0\le x<a$. Using \e{y3}, we obtain
		\begin{align}
			|\Delta \Phi_m (\ZI_I)(x)|&\lesssim \int_a^b\sum_{2^{m-1}<k\le 2^m}|\phi_k(x)\phi_k(t)|dt\lesssim \int_a^b\frac{2^m}{(1+2^m|x-t|)^{1+\delta}}dt\\
			&\lesssim \sum_{k\ge 2^m|x-a|}\frac{1}{k^{1+\delta }}\lesssim \frac{1}{(1+2^m|x-a|)^{\delta}}.
		\end{align}
		In the case $x\ge b$ we will have the bound $\lesssim \frac{1}{(1+2^m|x-b|)^{\delta}}$. If $x\in I$, then by \e{u65} we can write
		\begin{equation*}
			\Delta\Phi_m (\ZI_I)(x)=\Delta \Phi_m (\ZI_{[0,1)\setminus I})(x),
		\end{equation*}
		and \e{u66} can be obtained similarly. Then, under the condition $x\in [0,1]\setminus 2I$ we have 
		\begin{align*}
			|\Delta \Phi_m (\ZI_I)(x)|&\lesssim \int_I\frac{2^m}{(1+2^m|x-t|)^{1+\delta}}dt\\
			&\le |I|\cdot \max_{t\in I}\frac{2^m}{(1+2^m|x-t|)^{1+\delta}}\lesssim 2^m|I|\xi(2^m(x-a))
		\end{align*}
	that gives us \e{u67}.
	\end{proof}
For a function $f(t)$ defined on a set $E$  we denote 
\begin{equation*}
	\OSC_E(f)=\sup_{x,t\in E}|f(x)-f(t)|.
\end{equation*}
	\begin{lemma}\label{L1}
		If $g\in L^1(0,1)$, then for any integers $n= m\ge 1$ it holds the inequality
		\begin{equation}\label{x1}
			|\Delta H_{n}(\Delta \Phi_m(g))(x)|\lesssim 2^{\alpha (m-n)}\cdot \MM\left(\sum_{k=2^{m-1}+1}^{2^m}|\langle g,\phi_k\rangle||\phi_k|\right)(x),\quad x\in [0,1).
		\end{equation}
	\end{lemma}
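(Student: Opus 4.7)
My plan is to write $\Delta H_n(F)$ as the difference of averages of $F:=\Delta\Phi_m(g)$ over two adjacent dyadic intervals of length $2^{-n}$, and then use the H\"older regularity \e{u44} of the $\phi_k$'s to gain the factor $2^{\alpha(m-n)}$. The key is that the assumption $n\ge m$ makes the Haar scale $2^{-n}$ at least as fine as the wavelet scale $2^{-(m-1)}\sim 2^{-m}$, which makes \e{u44} applicable with the desired profit.

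First, since $H_j$ is the conditional expectation onto the $\sigma$-algebra of dyadic intervals of length $2^{-j}$, setting $J:=I_n(x)$ and denoting by $J'$ its dyadic sibling in $I_{n-1}(x)$ gives
\begin{equation*}
\Delta H_n(F)(x)=H_n(F)(x)-H_{n-1}(F)(x)=\frac{1}{2|J|}\int_J\bigl(F(t)-F(\tau(t))\bigr)\,dt,
\end{equation*}
where $\tau\colon J\to J'$ is the translation by $\pm 2^{-n}$. Since $|t-\tau(t)|=2^{-n}\le 2^{-(m-1)}$, I then apply \e{u44} to each $\phi_k$ with $2^{m-1}<k\le 2^m$ and sum against $a_k:=\langle g,\phi_k\rangle$, obtaining
\begin{equation*}
|F(t)-F(\tau(t))|\;\lesssim\;2^{\alpha(m-n)}\sum_{k=2^{m-1}+1}^{2^m}|a_k|\cdot 2^{m/2}\xi\bigl(2^m(t-t_k)\bigr),
\end{equation*}
with the minor scale discrepancy between $2^{m-1}$ and $2^m$ absorbed into the implied constant.

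The third ingredient is that the right-hand side depends only mildly on $t$ inside $J$: since $|t-x|\le 2^{-n}\le 2^{-m}$, the quantities $1+2^m|t-t_k|$ and $1+2^m|x-t_k|$ differ by at most $1$, so $\xi(2^m(t-t_k))\sim\xi(2^m(x-t_k))$ uniformly in $t\in J$. Averaging over $J$ therefore yields
\begin{equation*}
|\Delta H_n(F)(x)|\;\lesssim\;2^{\alpha(m-n)}\sum_{k=2^{m-1}+1}^{2^m}|a_k|\cdot 2^{m/2}\xi\bigl(2^m(x-t_k)\bigr).
\end{equation*}

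Finally, the sum on the right is handled by exactly the chain of estimates developed in the proof of \lem{L8}: the approximation-of-identity bound for the radially decreasing kernel $2^m\xi(2^m|\cdot|)$ gives domination by $\MM\bigl(\sum_k|a_k||h_k|\bigr)(x)$, and since the Haar functions $h_k$ at a fixed level have pairwise disjoint supports one may write $\sum_k|a_k||h_k|=\sum_k\epsilon_k a_k h_k$ for suitable signs $\epsilon_k\in\{\pm 1\}$, whence \e{u45} delivers the final bound by $\MM\bigl(\sum_k|a_k||\phi_k|\bigr)(x)$. This yields \e{x1}. I expect the only real subtlety to be the scale bookkeeping: three scales, namely $2^{-(m-1)}$ for regularity, $2^{-n}$ for the Haar difference, and $2^{-m}$ inside $\xi$, must be tracked simultaneously, and everything is compatible with the stated bound only because $n\ge m$.
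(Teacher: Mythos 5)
Your proposal is correct and follows essentially the same route as the paper: the paper likewise writes $\Delta H_n$ as a difference of averages over $I_n(x)$ and $I_{n-1}(x)$, bounds it by $\sum_k|a_k|\,\OSC_{I_{n-1}(x)}(\phi_k)$ using \e{u44} to extract $2^{\alpha(m-n)}$, and then runs the same approximate-identity/maximal-function chain ending with \e{u45}. Your sibling-translation formulation of the oscillation step and your unified treatment of the case $n=m$ (which the paper dismisses as trivial) are only cosmetic differences.
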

	\begin{proof}
The case of $n=m$ is trivial. So we suppose $n>m$ and denote $a_k=\langle g,\phi_k\rangle$.	First using \e{u44} then \e{u45}, we obtain 
		\begin{align}
			|\Delta H_{n}(\Delta \Phi_m(g))(x)|&=\left|\frac{1}{|I_{n-1}(x)|}\int_{I_{n-1}(x)}\Delta \Phi_m(g)-\frac{1}{|I_{n}(x)|}\int_{I_{n}(x)}\Delta \Phi_m(g)\right|\\
			&\le \sum_{k=2^{m-1}+1}^{2^m} |a_k|\OSC_{I_{n-1}(x)}(\phi_k)\\
			&\lesssim 2^{m/2}\cdot 2^{\alpha (m-n)}\sum_{k=2^{m-1}+1}^{2^m} |a_k|\xi(2^m(x-t_k))\\
			&\lesssim 2^{\alpha (m-n)}\int_0^1\sum_{k=2^{m-1}+1}^{2^m} |a_k||h_k(t)|\cdot 2^m\xi(2^m(x-t))dt\\
			&\lesssim 2^{\alpha (m-n)}\cdot \MM\left(\sum_{k=2^{m-1}+1}^{2^m}a_k  h_k\right)(x)\\
			&\lesssim2^{\alpha (m-n)}\cdot \MM\left(\sum_{k=2^{m-1}+1}^{2^m}|a_k||\phi_k|\right)(x),
		\end{align}
	and lemma is proved.
	\end{proof}
	
	\begin{lemma}\label{L2}
		If $f\in L^1(0,1)$ and $m\ge n\ge 1$, then for any $q>1$ with $q'=q/(q-1)>\delta^{-1}$ we have
		\begin{equation}\label{x2}
			|H_{n}(\Delta \Phi_m(f))(x)|\lesssim 2^{(n-m)/q'}\MM_q (\Delta\Phi_m(f))(x),\quad x\in [0,1).
		\end{equation}
	\end{lemma}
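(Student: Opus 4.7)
The plan is to leverage the fact that $\Delta\Phi_m$ is the orthogonal projection on $L^2$ onto $V_m:=\mathrm{span}\{\phi_{2^{m-1}+1},\ldots,\phi_{2^m}\}$, hence self-adjoint and idempotent. Setting $g:=\Delta\Phi_m(f)$ so that $\Delta\Phi_m(g)=g$, and letting $I=I_n(x)$, one obtains the key identity
\begin{equation*}
|I|\,H_n(g)(x)=\int_I g=\int_0^1 \Delta\Phi_m(g)\cdot\ZI_I=\int_0^1 g\cdot\Delta\Phi_m(\ZI_I),
\end{equation*}
which replaces the sharp cut-off $\ZI_I$ by the smoothed kernel $\Delta\Phi_m(\ZI_I)$, whose pointwise decay is sharply controlled by \lem{L9}.

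Next I would decompose $[0,1)$ into a local piece $E_0=2I$ and dyadic annuli $E_k=(2^{k+1}I\setminus 2^k I)\cap[0,1)$ for $k\ge 1$, and estimate $\int_{E_k}g\cdot\Delta\Phi_m(\ZI_I)$ by H\"older with exponents $(q,q')$. On $E_0$, bound \e{u66} together with the hypothesis $\delta q'>1$ gives $\|\Delta\Phi_m(\ZI_I)\|_{L^{q'}(E_0)}\lesssim 2^{-m/q'}$ (change of variable $s=2^m(t-a)$, using the integrability of $(1+|s|)^{-\delta q'}$), while the trivial estimate $\bigl(\int_{E_0}|g|^q\bigr)^{1/q}\lesssim 2^{-n/q}\MM_q(g)(x)$ applies since $E_0\ni x$. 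Multiplying and then dividing by $|I|=2^{-n}$ produces exactly the target $2^{(n-m)/q'}\MM_q(g)(x)$.

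On the far annuli $E_k$, the sharper off-support bound \e{u67} yields $|\Delta\Phi_m(\ZI_I)|\lesssim 2^{-\delta(m-n)-k(1+\delta)}$ pointwise; combined with $|E_k|^{1/q'}\sim 2^{(k-n)/q'}$ and $\bigl(\int_{E_k}|g|^q\bigr)^{1/q}\lesssim 2^{(k-n)/q}\MM_q(g)(x)$ (noting $E_k\subset 2^{k+1}I\ni x$), together with $1/q+1/q'=1$, the $E_k$-contribution after dividing by $|I|$ reduces to $\lesssim 2^{-\delta(m-n)}\cdot 2^{-\delta k}\MM_q(g)(x)$. Summation in $k\ge 1$ gives a tail bound $\lesssim 2^{-\delta(m-n)}\MM_q(g)(x)$, which is dominated by $2^{(n-m)/q'}\MM_q(g)(x)$ because $\delta>1/q'$.

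The main technical point is the careful bookkeeping of powers of $2$: the H\"older split of exponents between the $L^{q'}$ kernel estimate and the local $L^q$ mean of $g$ must be exactly balanced so that both the local contribution from $E_0$ and the summed contribution from the annuli yield the same target factor $2^{(n-m)/q'}$. Both reductions hinge on the single hypothesis $\delta q'>1$, which simultaneously ensures $L^{q'}$-integrability of the boundary decay $(1+2^m|t-a|)^{-\delta}$ and the domination $2^{-\delta(m-n)}\le 2^{(n-m)/q'}$.
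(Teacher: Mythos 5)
Your proposal is correct and follows essentially the same route as the paper: the same duality identity $\int_I g=\int_0^1 g\cdot\Delta\Phi_m(\ZI_I)$ exploiting idempotence and self-adjointness of $\Delta\Phi_m$, the same dyadic-annulus decomposition around $I_n(x)$, and the same use of \lem{L9} with the hypothesis $\delta q'>1$ both for the local $L^{q'}$ kernel bound and for absorbing the tail factor $2^{-\delta(m-n)}$ into $2^{(n-m)/q'}$. The only cosmetic difference is that on the annuli you apply H\"older with $(q,q')$ where the paper uses the plain $L^1$ maximal function $\MM\le\MM_q$; both yield the same estimate.
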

	\begin{proof}
		Let $x\in [0,1)$. By the orthogonality of $\{\phi_n\}$ we have $\Delta \Phi_m(f)=\Delta \Phi_m(\Delta \Phi_m(f))$. Let $J_0=I_n(x)$, $J_k=2^kI_n(x)\cap [0,1]$, $k=1,2,\ldots $.
		Thus, using a duality argument, we can write
		\begin{align}
			\left|H_{n}(\Delta \Phi_m(f))(x)\right|&=\frac{1}{|I_{n}(x)|}\left|\int_{I_{n}(x)}\Delta \Phi_m(f)(u)du\right|\\
			&=\frac{1}{|I_{n}(x)|}\left|\int_{I_{n}(x)}\Delta \Phi_m(\Delta \Phi_m(f))(u)du\right|\\
			&=2^n\left|\int_0^1\Delta \Phi_m(\ZI_{I_n(x)})(u)\Delta \Phi_m(f)(u)du\right|\\
			&\le 2^n\left|\int_{J_0}\Delta \Phi_m(\ZI_{I_n(x)})(u)\Delta \Phi_m(f)(u)du\right|\\
			&\quad+2^n\sum_{k\ge 1}\left|\int_{J_k\setminus J_{k-1}}\Delta \Phi_m(\ZI_{I_n(x)})(u)\Delta \Phi_m(f)(u)du\right|.\label{u55}
				\end{align}
	Thus using \e{u66} for the first integral, we obtain
	\begin{align}
		2^n&\left|\int_{J_0}\Delta \Phi_m(\ZI_{I_n(x)})(u)\Delta \Phi_m(f)(u)du\right|\\
		&\qquad \lesssim {2^n}\left(\int_{J_0}|\Delta\Phi_m(f)(u)|^qdu\right)^{1/q}\cdot \left(\int_\ZR \frac{dt}{(1+2^m|t|)^{q'\delta}}\right)^{1/q'}\\
		&\qquad\lesssim 2^{(n-m)/q'}\MM_q (\Delta\Phi_m(f))(x).\label{u56}
	\end{align}
 From \e{u67} it follows that
\begin{equation*}
	|\Delta \Phi_m(\ZI_{I_n(x)})(u)|\lesssim2^{m-n}\left(\frac{2^{n-m}}{2^{k}}\right)^{1+\delta}=\frac{2^{\delta (n-m)}}{2^{k(1+\delta)}},\quad u\in J_k\setminus J_{k-1},
\end{equation*}
and so
	\begin{align}
		2^n&\left|\int_{J_k\setminus J_{k-1}}\Delta \Phi_m(\ZI_{I_n(x)})(u)\Delta \Phi_m(f)(u)du\right|\\
			&\qquad\qquad \lesssim2^n\frac{2^{\delta (n-m)}}{2^{k(1+\delta)}}\left|\int_{J_k}|\Delta \Phi_m(f)(u)|du\right|\\
			&\qquad\qquad \lesssim2^n\frac{2^{\delta (n-m)}}{2^{k(1+\delta)}}|J_k|\MM(\Delta \Phi_m(f))(x)\\
			&\qquad\qquad \le \frac{ 2^{\delta (n-m)}}{2^{k\delta}}\MM(\Delta \Phi_m(f))(x).\label{u57}
		\end{align}
		Hence, combining \e{u55}, \e{u56}  and \e{u57}, we get \e{x2}.
	\end{proof}
	Next, we will need a well-known discrete convolution inequality
	\begin{equation}\label{x4}
		\left(\sum_{n\in \ZZ}\left(\sum_{k\in \ZZ}|a_kb_{n-k}|\right)^2\right)^{1/2}\le \left(\sum_{n\in \ZZ} a_k^2\right)^{1/2}\cdot\left( \sum_{n\in \ZZ} |b_k|\right).
	\end{equation}
Also, recall the definition of the Haar square function 
\begin{equation*}
	S(f)(x)=\left(\sum_{k=1}^\infty |\langle f,h_k\rangle|^2h_k^2(x)\right)^{1/2}.
\end{equation*}
	\begin{lemma}[main]\label{L4}
		If $f\in L^p$, $1<p<\infty$, then
		\begin{equation}\label{x10}
			\left\|\sup_{|\lambda_k|\le 1}S\left(\sum_k \lambda_ka_k\phi_k\right)\right\|_p\lesssim \|f\|_p,
		\end{equation}
		where the $\sup$ is taken over all the sequences $\lambda=\{\lambda_k\}$ with $|\lambda_k|\le 1$.
	\end{lemma}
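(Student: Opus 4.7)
The plan is to decompose the Haar square function dyadically and then pass from Haar blocks to wavelet blocks via the off-diagonal estimates of Lemmas~\ref{L1} and~\ref{L2}. Fix coefficients $|\lambda_k|\le 1$ and set $g_\lambda=\sum_k\lambda_ka_k\phi_k$, which lies in $L^p$ by \rem{rem}. Since $h_1\equiv 1$ and the $h_k$'s at level $n\ge 1$ have disjoint supports,
\begin{equation*}
S(g_\lambda)(x)^2 \lesssim |\langle g_\lambda,h_1\rangle|^2 + \sum_{n\ge 1}|\Delta H_n(g_\lambda)(x)|^2,
\end{equation*}
so the main task is to control $\Delta H_n(g_\lambda)$ level by level. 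I would decompose $g_\lambda=\sum_{m\ge 1}\Delta\Phi_m(g_\lambda)$ and, for each $n$, split according to whether $m\le n$ (apply \lem{L1}) or $m>n$ (apply \lem{L2} together with the identity $\Delta H_n=H_n-H_{n-1}$).

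The critical observation that delivers uniformity in $\lambda$ is that both lemmas produce right-hand sides involving absolute values $|a_k||\phi_k|$, so the pointwise bound $|\lambda_k a_k|\le |a_k|$ absorbs the $\lambda$'s before any norm is taken. Writing $F_m=\sum_{k=2^{m-1}+1}^{2^m}|a_k\phi_k|$ and choosing an auxiliary exponent $q$ with $1<q<\min\{2,p\}$ and $q'>\delta^{-1}$ (such $q$ exists for every $1<p<\infty$ and $0<\delta<1$, since the second condition amounts to $q<(1-\delta)^{-1}$), the two ranges combine into a pointwise bound
\begin{equation*}
|\Delta H_n(g_\lambda)(x)| \lesssim \sum_{m\ge 1}\gamma_{m-n}\,\MM_q(F_m)(x),
\end{equation*}
where $\gamma_j=2^{\alpha j}$ for $j\le 0$ and $\gamma_j=2^{-j/q'}$ for $j>0$; in particular $\sum_{j\in\ZZ}|\gamma_j|<\infty$.

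Taking $\ell^2$-norms in $n$ and applying the discrete convolution inequality~\eqref{x4} reduces the $\gamma$-factors to a constant, leaving $\bigl(\sum_m\MM_q(F_m)^2\bigr)^{1/2}$. The Fefferman--Stein inequality~\eqref{u41} then erases $\MM_q$, and \lem{L11} closes the estimate via $\bigl\|(\sum_m F_m^2)^{1/2}\bigr\|_p\lesssim\|f\|_p$. Because the pointwise estimate for $S(g_\lambda)(x)$ does not depend on $\lambda$, the supremum over $|\lambda_k|\le 1$ can be moved inside before the $L^p$ norm is taken, which is precisely~\eqref{x10}.

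The main obstacle, in my view, is the joint choice of $q$: it must be small enough that $q'>1/\delta$ (so the tail term coming from~\eqref{u66} in \lem{L2} is integrable) and at the same time smaller than $\min\{2,p\}$ (so that~\eqref{u41} applies). Both constraints can be met simultaneously for any $0<\delta<1$ and $1<p<\infty$, after which the rest is a routine assembly. A harmless side issue is the $h_1$ contribution, which is controlled by $\|g_\lambda\|_1\lesssim\|f\|_p$.
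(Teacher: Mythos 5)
Your proposal is correct and follows essentially the same route as the paper: the same dyadic splitting of $S(g_\lambda)$ into the blocks $\Delta H_n$, the same case distinction $m\le n$ versus $m>n$ handled by Lemmas~\ref{L1} and~\ref{L2}, the same absorption of the $\lambda_k$'s via the absolute values $|a_k||\phi_k|$, and the same assembly through the convolution inequality \eqref{x4}, Fefferman--Stein \eqref{u41}, and Lemma~\ref{L11} (the paper's explicit choice $q=\min\{(p+1)/2,(1-\delta/2)^{-1},3/2\}$ realizes exactly your two constraints on $q$). The only cosmetic difference is the $h_1$ term, which in fact vanishes because of the mean-zero condition \eqref{u65}, so your $\|g_\lambda\|_1$ bound is not even needed.
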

	\begin{proof}
		Clearly, we can suppose that $a_1=0$ and $0<\delta<1$. For a given sequence $\lambda=\{\lambda_k:\, |\lambda_k|\le 1\}$, we denote
		\begin{align}
			f_\lambda=\sum_{k\ge 1} \lambda_ka_k\phi_k=\sum_{k\ge 2} \lambda_ka_k\phi_k.
		\end{align}
		Chose a number $q=\min\{(p+1)/2,(1-\delta/2)^{-1}, 3/2\}$, which is depended only on $p$ and $\delta$. Combining \lem{L1},  \lem{L2}, and inequality \e{x4}, we can write 
		\begin{align}
			\sum_{n=1}^\infty \left|\Delta H_n\left(\sum_{k\ge 1} \lambda_ka_k\phi_k\right)(x)\right|^2&\le \sum_{n=1}^\infty\left[\sum_{m=1}^{\infty} 	|\Delta H_{n}(\Delta \Phi_m(f_\lambda))(x)|\right]^2\\
			&\lesssim \sum_{n=1}^\infty \left[\sum_{m=n+1}^\infty 2^{(n-m)/q'} \MM_q(\Delta \Phi_m(f_\lambda))(x)\right]^2\\
			&\quad +\sum_{n=1}^\infty \left[\sum_{m=1}^n2^{\alpha(m-n)}\big(\MM(\Delta \Phi_m(f_\lambda))(x)\big)\right]^2\\
			&\lesssim \sum_{n=1}^\infty \left[\sum_{m=n+1}^\infty 2^{(n-m)/q'} \MM_q\left(\sum_{k=2^{m-1}+1}^{2^{m}}|a_k\phi_k|\right)(x)\right]^2\\
			&\quad  +\sum_{n=1}^\infty  \left[\sum_{m=1}^n2^{\alpha(m-n)}\MM\left(\sum_{k=2^{m-1}+1}^{2^{m}}|a_k\phi_k|\right)(x)\right]^2\\
			&\lesssim \sum_{m=1}^\infty  \left[\MM_q\left(\sum_{k=2^{m-1}+1}^{2^{m}}|a_k\phi_k|\right)(x)\right]^2.\label{x7}
		\end{align}
		Thus we get the pointwise estimate
		\begin{equation*}
			\sup_{|\lambda_k|\le 1}S\left(\sum_{k\ge 1}\lambda_ka_k\phi_k\right)(x)\lesssim  \left[\sum_{m=1}^\infty  \left(\MM_q\left(\sum_{k=2^{m-1}+1}^{2^{m}}|a_k\phi_k|\right)(x)\right)^2\right]^{1/2}.
		\end{equation*}
		Then, using \e{u41} and \e{u58}, we obtain
		\begin{equation*}
			\left\|\sup_{|\lambda_k|\le 1}S\left(\sum_{k\ge 1}\lambda_ka_k\phi_k\right)\right\|_p\lesssim \left\|\left[\sum_{m=1}^\infty  \left(\sum_{k=2^{m-1}+1}^{2^{m}}|a_k\phi_k|\right)^2\right]^{1/2}\right\|_p\lesssim \|f\|_p.
		\end{equation*}
		Lemma is proved.
	\end{proof}

	\section{Proof of \trm{T1} and Corollaries \ref{C01} and \ref{C02}}\label{S5}
	A key argument in the proof of \trm{T1} is the following good-$\lambda$ inequality due to Chang-Wilson-Wolff  (see \cite{CWW}, Corollary 3.1):
	\begin{multline}\label{CWW}
		|\{x\in[0,1):\,\MM^d(f)(x)>\lambda,\, S f(x)<\varepsilon\lambda\}|\\\lesssim\exp\left(-\frac{c}{\varepsilon^2}\right)|\{\MM^d(f)(x)>\lambda/2\}|,\,\lambda>0,\,0<\varepsilon <1,
	\end{multline}
	where $\MM^d$ denotes the dyadic maximal function 
	\begin{equation*}
		\MM^d(f)(x)=\sup_{n\ge 1}\frac{1}{|I_{n}(x)|}\int_{I_{n}(x)}|f|.
	\end{equation*}
	Let $f\in L^p(0,1)$, $1<p<\infty$ and $a_k=\langle f,\phi_k\rangle$. Suppose the functions $p_k\in L^p(\ZT)$, $k=1,2,\ldots,n$ satisfy $p_k\prec f$ with respect to our wavelet type system $\Phi$. It is clear that
	\begin{equation}\label{u34}
		\ZP(x)=\sup_{1\le k\le n}S(p_k)(x)\le \sup_{|\lambda_k|\le 1}S\left(\sum_{k=0}^\infty\lambda_ka_k\phi_k\right)(x).
	\end{equation}
	Thus, according to \lem{L4}, we have
	\begin{equation}\label{u25}
		\|\ZP\|_p\lesssim \|f\|_p.
	\end{equation}
	On the other hand, $|g(x)|\le \MM^d g(x)$ a.e. for any function $g\in L^1$, as well as $S (p_k)(x)\le \ZP(x)$, $k=1,2,\ldots,n$. Thus, applying inequality \e{CWW} with $\varepsilon_n=(c/\ln n)^{1/2}$, we obtain 
	\begin{align}\label{b2}
		|\{|p_k(x)|>&\lambda,\, \ZP(x)\le\varepsilon_n\lambda\}|\\
		&\lesssim\exp\left(-\frac{c}{\varepsilon_n^2}\right)|\{\MM^dp_k(x)>\lambda/2\}|.
	\end{align}
	For $p^*(x)=\max_{1\le k\le n}|p_k(x)|$ we obviously have
	\begin{align}
		\{ p^*(x)>\lambda\}&\subset \{p^*(x)>\lambda,\, \ZP(x)\le \varepsilon_n\lambda\}\\
		&\cup  \{\ZP(x)> \varepsilon_n\lambda\}=A(\lambda)\cup B(\lambda),
	\end{align}
	and thus
	\begin{equation}
		\|p^*\|_p^p\le p\int_0^\infty\lambda^{p-1} |A(\lambda)|d\lambda+p\int_0^\infty\lambda^{p-1} |B(\lambda)|d\lambda.
	\end{equation}
	From \e{b2} it follows that
	\begin{align}
		\int_0^\infty\lambda^{p-1}|A(\lambda)|d\lambda&\le \sum_{k=1}^n\int_0^\infty\lambda^{p-1} |\{ |p_k|>\lambda,\, \ZP\le \varepsilon_n\lambda\}|d\lambda\\
		&\le \exp\left(-\frac{c}{\varepsilon_n^2}\right)\sum_{k=1}^n\int_0^\infty\lambda^{p-1} |\{ \MM^d p_k>\lambda/2\}|d\lambda\\
		&\lesssim \frac{1}{n}\sum_{k=1}^n\|\MM^d p_k\|_p^p\\
		&\lesssim \frac{1}{n}\sum_{k=1}^n\| p_k\|_p^p\\
		&\le \|f\|_p^p.
	\end{align}
	Combining this and 
	\begin{align*}
		p\int_0^\infty\lambda^{p-1}|B(\lambda)|d\lambda&=\varepsilon_n^{-p}\|\ZP\|_p^p\lesssim (\log (n+1))^{p/2} \cdot \|f\|_p^p,
	\end{align*}
	we get
	\begin{equation}\label{b4}
		\|p^*\|_p=\left\|\max_{1\le m\le n}|p_m(x)|\right\|_p\lesssim \sqrt{\log (n+1)}\cdot \|f\|_p
	\end{equation} 
	and so \e{a2}. 
	
	\begin{proof}[Proof of \cor{C01}]
	In view of \e{1-2} it remains to show 
	\begin{equation}\label{u40}
		\ZA^p_{n,\text{mon}}(\Phi)\gtrsim \sqrt{\log (n+1)}.
	\end{equation}
 We remarked in \cite{Kar1} that from some results of Nikishin-Ulyanov \cite{NiUl} and Olevskii \cite{Ole}  it follows that $\ZA^2_{n,\text{mon}}(\Phi)\gtrsim \sqrt{\log (n+1)}$ for any complete orthonormal system $\Phi$. 	Thus \e{u40} holds in the case $p=2$. So there is a function $f\in L^2$ and a sequence of integer sets $G_1\subset G_2\subset \ldots \subset G_n$ such that 
	\begin{equation*}
		\left\|\max_{1\le m\le n}\left|\sum_{j\in G_m}\langle f,\phi_j\rangle \phi_j\right|\right\|_2\ge c \sqrt{\log (n+1)}\cdot \|f\|_2,
	\end{equation*}
	Thus for the operator
	\begin{equation*}
		U(f)=\max_{1\le m\le n}\left|\sum_{j\in G_m}\langle f,\phi_j\rangle\phi_j\right|
	\end{equation*}
	we have $\|U\|_{2\to 2}\ge c\sqrt{\log (n+1)}$. On the other, according to \trm{T1}, we have $\|U\|_{p\to p}\le c_p\sqrt{\log (n+1)}$ for all $1<p<\infty$. Combining these two estimates with the "full version" of Marcinkiewicz interpolation theorem (see \cite{Zyg}, Theorem 4.6) one can easily obtain $\|U\|_{p\to p}\gtrsim \sqrt{\log (n+1)}$ that completes the proof of \cor{C01}.
\end{proof}
\begin{proof}[Proof of \cor{C02}]
	The upper bounds for both systems  follow from \trm{T1}. The lower bound $\ZA^p_{n,\text{sgn}}(\Phi)\gtrsim \sqrt{\log (n+1)}$  for the Franklin system if $p=2$ follows from the remark (1) given in  the introduction.  For the Haar system the case of $p=2$ of the same bound is known from \cite{NiUl}. To show the general case $1<p<\infty$ we again use the argument of the Marcinkiewicz interpolation theorem.
\end{proof}

	\bibliographystyle{plain}
	
	\begin{bibdiv}
		\begin{biblist}
			
			\bib{CWW}{article}{
				author={Chang, S.-Y. A.},
				author={Wilson, J. M.},
				author={Wolff, T. H.},
				title={Some weighted norm inequalities concerning the Schr\"{o}dinger
					operators},
				journal={Comment. Math. Helv.},
				volume={60},
				date={1985},
				number={2},
				pages={217--246},
				issn={0010-2571},
				review={\MR{800004}},
				doi={10.1007/BF02567411},
			}
				
			\bib{Gev}{article}{
				author={Gevorkyan, G. G.},
				title={On Weyl factors for the unconditional convergence of series in the
					Franklin system},
				language={Russian},
				journal={Mat. Zametki},
				volume={41},
				date={1987},
				number={6},
				pages={789--797, 889},
				issn={0025-567X},
				review={\MR{904246}},
			}
			
			\bib{GHS}{article}{
				author={Grafakos, Loukas},
				author={Honz\'{i}k, Petr},
				author={Seeger, Andreas},
				title={On maximal functions for Mikhlin-H\"{o}rmander multipliers},
				journal={Adv. Math.},
				volume={204},
				date={2006},
				number={2},
				pages={363--378},
				issn={0001-8708},
				review={\MR{2249617}},
				doi={10.1016/j.aim.2005.05.010},
			}
		\bib{Grip}{article}{
			author={Gripenberg, Gustaf},
			title={Wavelet bases in $L^p({\bf R})$},
			journal={Studia Math.},
			volume={106},
			date={1993},
			number={2},
			pages={175--187},
			issn={0039-3223},
			review={\MR{1240312}},
			doi={10.4064/sm-106-2-175-187},
		}
		
		\bib{HeWe}{book}{
			author={Hern\'{a}ndez, Eugenio},
			author={Weiss, Guido},
			title={A first course on wavelets},
			series={Studies in Advanced Mathematics},
			note={With a foreword by Yves Meyer},
			publisher={CRC Press, Boca Raton, FL},
			date={1996},
			pages={xx+489},
			isbn={0-8493-8274-2},
			review={\MR{1408902}},
			doi={10.1201/9781420049985},
		}
			\bib{KaSt}{book}{
				author={Ka\v{c}ma\v{z}, S.},
				author={\v{S}te{\i}ngauz, G.},
				title={Teoriya ortogonalnykh ryadov},
				language={Russian},
				publisher={Gosudarstv. Izdat. Fiz.-Mat. Lit., Moscow},
				date={1958},
				pages={507 pp. (1 insert)},
				review={\MR{0094635}},
			}
			\bib{Kar1}{article}{
				author={Karagulyan, Grigori  A.},
				title={On systems of non-overlapping Haar polynomials},
				journal={Ark. Math.},
				volume={58},
				number={1},
				date={2020},
				pages={121--131},
				doi={10.4310/arkiv.2020.v58.n1.a8}
			}
			\bib{Kar2}{article}{
				author={Karagulyan, Grigori  A.},
				title={On Weyl multipliers of the rearranged trigonometric system}
				journal={Sbornik Mathematics},
				volume={211},
				number={12},
				date={2020}
			    pages={1704-1736}
				doi={10.1070/SM9422}
			}
			\bib{Kar3}{article}{
				author={Karagulyan, Grigori  A.},
				title={A sharp estimate for the majorant norm of a rearranged trigonometric system},
				journal={Russian Math. Surveys},
				volume={75},
				number={3},
				date={2020},
				pages={569-571},
				doi={10.1070/RM9946}
			}
		\bib{KaLa}{article}{
			author={Karagulyan, Grigori A.},
			author={Lacey, Michael T.},
			title={On logarithmic bounds of maximal sparse operators},
			journal={Math. Z.},
			volume={294},
			date={2020},
			number={3-4},
			pages={1271--1281},
			issn={0025-5874},
			review={\MR{4074040}},
			doi={10.1007/s00209-019-02314-9},
		}
			\bib{KaSa}{book}{
				author={Kashin, B. S.},
				author={Saakyan, A. A.},
				title={Orthogonal series},
				series={Translations of Mathematical Monographs},
				volume={75},
				note={Translated from the Russian by Ralph P. Boas;
					Translation edited by Ben Silver},
				publisher={American Mathematical Society, Providence, RI},
				date={1989},
				pages={xii+451},
				isbn={0-8218-4527-6},
				review={\MR{1007141}},
			}
			
			\bib{Men}{article}{
				author={Menshov, D. E.},
				title={Sur les series de fonctions orthogonales I},
				language={Russian},
				journal={Fund. Math.},
				volume={4},
				date={1923},
				pages={82--105},
				
			}
		\bib{Mey}{book}{
			author={Meyer, Yves},
			title={Ondelettes et op\'{e}rateurs. II},
			language={French},
			series={Actualit\'{e}s Math\'{e}matiques. [Current Mathematical Topics]},
			note={Op\'{e}rateurs de Calder\'{o}n-Zygmund. [Calder\'{o}n-Zygmund operators]},
			publisher={Hermann, Paris},
			date={1990},
			pages={i--xii and 217--384},
			isbn={2-7056-6126-7},
			review={\MR{1085488}},
		}
		\bib{Mul}{book}{
			author={M\"{u}ller, Paul F. X.},
			title={Isomorphisms between $H^1$ spaces},
			series={Instytut Matematyczny Polskiej Akademii Nauk. Monografie
				Matematyczne (New Series) [Mathematics Institute of the Polish Academy of
				Sciences. Mathematical Monographs (New Series)]},
			volume={66},
			publisher={Birkh\"{a}user Verlag, Basel},
			date={2005},
			pages={xiv+453},
			isbn={978-3-7643-2431-5},
			isbn={3-7643-2431-7},
			review={\MR{2157745}},
		}
			
			\bib{NiUl}{article}{
				author={Niki\v{s}in, E. M.},
				author={Ul\cprime janov, P. L.},
				title={On absolute and unconditional convergence},
				language={Russian},
				journal={Uspehi Mat. Nauk},
				volume={22},
				date={1967},
				number={3 (135)},
				pages={240--242},
				issn={0042-1316},
				review={\MR{0212488}},
			}
			
			\bib{Ole}{article}{
				author={Olevski{\i}, A. M.},
				title={Divergent Fourier series},
				language={Russian},
				journal={Izv. Akad. Nauk SSSR Ser. Mat.},
				volume={27},
				date={1963},
				pages={343--366},
				issn={0373-2436},
				review={\MR{0147834}},
			}
		\bib{Pol}{article}{
			author={Pole\v{s}\v{c}uk, S. N.},
			title={On the unconditional convergence of orthogonal series},
			language={English, with Russian summary},
			journal={Anal. Math.},
			volume={7},
			date={1981},
			number={4},
			pages={265--275},
			issn={0133-3852},
			review={\MR{648491}},
			doi={10.1007/BF01908218},
		}
			\bib{Rad}{article}{
				author={Rademacher, Hans},
				title={Einige S\"{a}tze \"{u}ber Reihen von allgemeinen Orthogonalfunktionen},
				language={German},
				journal={Math. Ann.},
				volume={87},
				date={1922},
				number={1-2},
				pages={112--138},
				issn={0025-5831},
				review={\MR{1512104}},
				doi={10.1007/BF01458040},
			}
			\bib{Ste}{book}{
				author={Stein, Elias M.},
				title={Harmonic analysis: real-variable methods, orthogonality, and
					oscillatory integrals},
				series={Princeton Mathematical Series},
				volume={43},
				note={With the assistance of Timothy S. Murphy;
					Monographs in Harmonic Analysis, III},
				publisher={Princeton University Press, Princeton, NJ},
				date={1993},
				pages={xiv+695},
				isbn={0-691-03216-5},
				review={\MR{1232192}},
			}
			
			\bib{Uly3}{article}{
				author={Ul\cprime janov, P. L.},
				title={Weyl multipliers for the unconditional convergence of orthogonal
					series},
				language={Russian},
				journal={Dokl. Akad. Nauk SSSR},
				volume={235},
				date={1977},
				number={5},
				pages={1038--1041},
				issn={0002-3264},
				review={\MR{0450886}},
			}
		
	\bib{Woj}{article}{
		author={Wojtaszczyk, P.},
		title={Wavelets as unconditional bases in $L_p({\bf R})$},
		journal={J. Fourier Anal. Appl.},
		volume={5},
		date={1999},
		number={1},
		pages={73--85},
		issn={1069-5869},
		review={\MR{1682254}},
		doi={10.1007/BF01274190},
	}
	\bib{Wol}{article}{
	author={Wolnik, B.},
	title={The wavelet type systems},
	journal={Banach center publications},
	volume={72},
	date={2006},
	pages={397-406},
}
\bib{Zyg}{book}{
	author={Zygmund, Antoni},
	title={Trigonometric series. 2nd ed. Vol. 2},
	publisher={Cambridge University Press, New York},
	date={1959},
	review={\MR{0107776}},
}
		\end{biblist}
	\end{bibdiv}

\end{document}